\documentclass[a4paper,12pt]{article}
\usepackage[utf8]{inputenc}
\usepackage{amssymb}
\usepackage{amsmath}
\usepackage{amsthm}
\usepackage{enumitem}
\usepackage{color}
\usepackage{soul}
\usepackage{hyperref}

\setlength{\textheight}{23cm} \setlength{\textwidth}{16cm}
\setlength{\hoffset}{-1.3cm} \setlength{\voffset}{-1.8cm}

\newtheorem{thm}{Theorem}

\newtheorem{lem}[thm]{Lemma}
\newtheorem{cor}[thm]{Corollary} 
\newtheorem{prop}[thm]{Proposition}
 
\newtheorem*{op*}{Open Problem}

\theoremstyle{definition}  
\newtheorem{remark}{Remark}

\linespread{1.2}


\newcommand{\N}{\mathbb{N}}

\newcommand{\IN}{\mathbb{N}}

\newcommand{\IR}{\mathbb{R}}

\newcommand{\vol}{\mathrm{vol}}
\newcommand{\dd}{\mathrm{d}}

\DeclareMathOperator{\id}{id}

\title{Entropy numbers of finite-dimensional\\ Lorentz space embeddings}
\author{Joscha Prochno\footnote{Faculty of Computer Science and Mathematics, University of Passau, Dr.-Hans-Kapfinger-Str. 30, 94032 Passau, Germany, {\tt joscha.prochno@uni-passau.de}}, 
Mathias Sonnleitner$^{*,}$\footnote{Institute of Mathematical Stochastics, University of M\"unster, Orl\'eans-Ring 10, 48149 M\"unster, Germany, {\tt mathias.sonnleitner@uni-muenster.de}} 
and Jan Vyb\'iral\footnote{Department of Mathematics, Faculty of Nuclear Sciences and Physical Engineering,
Czech Technical University, Trojanova 13, 12000 Praha, Czech Republic, {\tt jan.vybiral@fjfi.cvut.cz}}
}


\begin{document} 

\maketitle

\begin{center}
\emph{Dedicated to the memory of Albrecht Pietsch.}
\vskip 3mm
\end{center}

\abstract{
\noindent The sequence of entropy numbers quantifies the degree of compactness of a linear operator acting between quasi-Banach spaces. We determine the asymptotic behavior of entropy numbers in the case of natural embeddings
between finite-dimensional Lorentz spaces $\ell_{p,q}^n$ in all regimes; our results are sharp up to constants. This generalizes classical results obtained by Schütt (in the case of Banach spaces) and Edmunds and Triebel, K\"uhn,
as well as Gu\'edon and Litvak (in the case of quasi-Banach spaces) for entropy numbers of identities between finte-dimensional Lebesgue sequence spaces $\ell_p^n$. We employ techniques such as interpolation, volume comparison as well as techniques from sparse
approximation and combinatorial arguments.  Further, we characterize entropy numbers of embeddings between finite-dimensional symmetric quasi-Banach spaces in terms of best $s$-term approximation numbers. 
	\\
	{\bf Keywords}. {Entropy numbers, Lorentz spaces, natural embeddings, quasi-Banach spaces, sparse approximation}\\
	{\bf MSC 2020}. {Primary: 47B06, 46B06  Secondary: 41A46, 46B07, 46A16} 
}

\section{Introduction and main result}

The fundamental notion of covering numbers and with it Pietsch's inverse concept of entropy numbers \cite{P1978_operator_ideals} quantify to what extent a bounded linear operator is compact.
They are important elements in both pure and applied mathematics, for instance, in the geometry of Banach spaces \cite{BLM1989,C1981,GKS1987,K1986},
signal processing and compressed sensing \cite{CRT2006,Don06,FR13}, or the theory of random processes \cite{LT1991,LL2002,T2005}. In particular, they are useful measures for complexity in approximation theory \cite{CS90,ET96} and a powerful tool in the flourishing field of statistical machine learning, explaining, for instance, the effect of the choice of kernel function on the generalization performance of support vector machines \cite{WSS2001} (see also \cite{CS02}). The origin of these notions can in fact be traced back to Kolmogorov, who, motivated by ideas and definitions in information theory, introduced the so-called $\varepsilon$-entropy already in the $1950$s \cite{K1956}.  

From the point of view of geometric functional analysis, entropy numbers are quite well understood in a number of fundamental and important situations, but before presenting explicit examples, let us further motivate the interest
in them by looking at their relation to a specific sequence of singular numbers ($s$-numbers) of operators and the problem of optimal recovery.  Via a famous inequality of Carl \cite{C1981}, entropy numbers are related to the most important scales of $s$-numbers and, specifically, can provide lower bounds on the so-called Gelfand numbers,
which bound from below the error of optimal reconstructions using linear measurements (see, e.g., \cite{NW08} for background information on the related field of information-based complexity); one should note that,
in general, it is significantly more delicate to determine the asymptotic behavior of Gelfand numbers than of entropy numbers. Let $n\in\N$ and consider continuous linear functionals $L_1,\dots,L_n$ on a quasi-normed space $F$.
The problem is to approximate in the quasi-norm of a quasi-normed space $G$ into which $F$ continuously embeds an unknown element $f$ from the unit ball of $F$ purely based on the measurements $L_1(f),\dots,L_n(f)$.
Then the worst-case error of any such approximation $A(f)=\varphi(L_1(f),\dots,L_n(f))$, where $\varphi\colon \IR^n\to G$ is an algorithm using the linear information, is bounded from below by the $n$-th Gelfand number
of the natural identity $\id\colon F\to G$ (see, e.g., \cite[Proposition 1.2]{FPR+10}). In the context of compressed sensing, where one is interested in recovery of (nearly) sparse signals, this has been used by Donoho~\cite{Don06} in the case of the embedding $\id\colon \ell_p^n \to \ell_2^n$ with the claimed extension of Carl's inequality to quasi-Banach spaces only proven later by Hinrichs, Kolleck, and Vyb\'iral \cite{HKV16}.
The lower bound for the associated Gelfand numbers has been proven earlier by Foucart, Pajor, Rauhut, and Ullrich \cite{FPR+10}.   

Of specific interest, as indicated above, are often finite-dimensional embeddings, also because they can serve as a discrete model for operators between function spaces, such as differential operators between Sobolev spaces \cite{ET96,K1986}.
Arguably most fundamental in this respect are the natural embeddings of Lebesgue sequence spaces, i.e., of  $\id:\ell_p^n\to\ell_q^n$, and in this situation the behavior of entropy numbers is in fact well understood. Indeed, in the case of Banach spaces, this is a classical result of Schütt~\cite{Sch84} (who actually obtained more general results for entropy numbers of diagonal operators between symmetric Banach sequence spaces), while its extension to the quasi-Banach space setting has been obtained by Edmunds and Triebel \cite[Sec. 3.2.2]{ET96}, Kühn \cite{Kue01}, and independently by Gu\'{e}don and Litvak \cite{GL00}; we refer to the survey \cite{KV20} by Kossaczk\'a and Vyb\'iral (see also \cite[Remark 3]{MU21}) for an account on the history of this result.

Before we state the result, let us recall that the $k$-th (dyadic) entropy number of a continuous linear map $T\colon X\to Y$ between quasi-Banach spaces with unit balls $B_X$ and $B_Y$, respectively, is given by
\[
e_k(T\colon X\to Y):=\inf\Bigg\{\varepsilon>0 \colon \exists\, y_1,\dots,y_{2^{k-1}}\in Y \colon T(B_X)\subset \bigcup_{i=1}^{2^{k-1}} (y_i+\varepsilon B_Y)\Bigg\}.
\]
Moreover, entropy numbers are almost $s$-numbers and satisfy
\begin{enumerate}
	\item (norming property) $2^{1-1/p}\|T\|\le e_1(T)\le \|T\|$, whenever $Y$ is a $p$-Banach space,
	\item (monotonicity) $e_1(T)\ge e_2(T)\ge \cdots \ge 0$,
	\item (sub-multiplicativity) $e_{n+m-1}(ST)\le e_n(S)\cdot e_m(T)$ for $n,m\in\IN$ and $S\colon Y\to Z$ is a linear and continuous map to a quasi-Banach space $Z$.
\end{enumerate}
These properties can be deduced, e.g.,  from \cite[Lemma 1.3.1.1]{ET96} and its proof; see also Section~\ref{sec:volume} for more information on quasi- and $p$-Banach spaces. 

 For a sequence $x=(x_i)_{i\in\IN}\in\IR^{\IN}$, we denote
\[
\|x\|_p:=
\begin{cases}
	\displaystyle\bigg(\sum_{i=1}^{\infty}|x_i|^p\bigg)^{1/p}&\colon 0<p<\infty,\\
	\displaystyle\max_{i\in \IN}|x_i|&\colon  p=\infty,
\end{cases}
\]
and write $\ell_p:=\{x\in\IR^{\IN}\colon \|x\|_p<\infty\}$ and $\ell_p^n$ for $(\IR^n,\|\cdot\|_p)$.

We now present the asymptotics for the entropy numbers of embeddings between $\ell_p$-spaces.  For $0 < p\le r\le \infty$, one has \begin{equation} \label{eq:classical-p-le-r}
e_k(\id\colon \ell_p^n\to \ell_r^n)
\asymp
\begin{cases}
	1 &\colon k\le \log n,\\
	\Big(\frac{\log(n/k+1)}{k}\Big)^{1/p-1/r}&\colon  \log n\le k\le n,\\
	2^{-k/n}n^{1/r-1/p}&\colon k\ge n,
\end{cases}
\end{equation}
and if $0< r\le p\le \infty$, then
\begin{equation}\label{eq:classical-p-ge-r}
e_k(\id\colon \ell_p^n\to \ell_r^n)
\asymp 2^{-k/n}n^{1/r-1/p}.
\end{equation}
Here, the relation $\asymp$ denotes equivalence up to implicit constants independent of $k$ and $n$ (while they may depend on the parameters $p$ or $r$), and we interpret $1/\infty=0$; for the non-commutative counterpart
to the previous result, we refer to \cite{HPV2017}. 

As mentioned above, for various reasons it is of interest to understand the asymptotic behavior of entropy numbers of finite-dimensional embeddings. 
In the case of embeddings between Orlicz sequence spaces, an asymptotic characterization similar to the one presented before was obtained by Kaewtem and Netrusov \cite[Theorem 4.2]{KN21}.

The main aim of our paper is to give a complete asymptotic characterization of entropy numbers of embeddings of Lorentz sequence spaces. Before we come to that, we introduce the necessary notation
and provide some historical remarks. Lorentz spaces of measurable functions were introduced by G. G. Lorentz \cite{L1950, L1951} and since then they have become an indisposable tool
in mathematical analysis \cite{stein1971}. 
Lorentz spaces arise from Lebesgue spaces via interpolation and we shall give more details on this later in Section~\ref{sec:interpolation}. Moreover, beyond being studied in the functional analysis
literature, Lorentz spaces also play fundamental roles in applied mathematics, for instance, in signal processing \cite{CT06}. In particular, the weak $\ell_p$-spaces $\ell_{p,\infty}^n$ are used
in the theory of compressed sensing \cite{FR13}. 

The theory of Lorentz function spaces includes as a special case also the Lorentz sequence spaces, which we consider in this paper.
We give their definition using the notion of non-increasing rearrangement. If $x=(x_i)_{i\in\IN}\in\IR^{\IN}$ is an infinite sequence, we define its non-increasing rearrangement $x^*=(x^*_i)_{i\in\IN}$, where
$x_i^*:=\inf\{\lambda>0\colon |\{k\in\IN\colon |x_k|> \lambda\}|\le i-1\}$.
For $0< p, u\le \infty$ the Lorentz $\ell_{p,u}$-quasi-norm of $x=(x_i)_{i\in\IN}\in\IR^{\IN}$ is defined as 
\[
\|x\|_{p,u}:=\|i^{1/p-1/u}x_i^*\|_u,
\]
(see, e.g., \cite[(1.4.9)]{Grafakos} or \cite[Lemma 2.9]{LN2023} for the fact that this is a quasi-norm).
Lorentz sequence spaces (at least in the case $1\le u\le p$) appear already in \cite[Section I.3.a]{LiTz} as an example of Banach spaces with a symmetric basis.
We shall write $\ell_{p,u}:=\{x\in\IR^{\IN}\colon \|x\|_{p,u}<\infty\}$ for the corresponding Lorentz sequence space
and $\ell_{p,u}^n$ for the space $(\IR^n,\|\cdot\|_{p,u})$. The finite-dimensional unit ball is then given by $B_{p,u}^n:=\{x\in\IR^n\colon \|x\|_{p,u}\le 1\}$.

For general background on Lorentz sequence spaces, we refer the reader to \cite{AEP1988,CL2019} and the original work of Lorentz \cite{L1951}.
From the point of view of geometric functional analysis, Lorentz sequence spaces form a generalization of $\ell_p$-spaces belonging to the important class of $1$-symmetric Banach spaces.
Various analytic and geometric properties of Lorentz spaces have been studied in the local theory of Banach spaces and geometric functional analysis
(see, e.g., \cite{A2023,DV20,F2023,KPS23,KM04,LN2023,P2021,R1982,Sch1989}).

Let us now elaborate on the relation of Lorentz spaces and entropy numbers, bringing both concepts together. Using finite-dimensional Lorentz space embeddings, Edmunds and Netrusov \cite{EN11} disproved a conjecture regarding the interpolation behavior of entropy numbers. More precisely, they showed that entropy numbers are not compatible with respect to interpolation on both sides, while interpolation on either side is indeed possible. Let us give some explanation (we refer to Section~\ref{sec:interpolation} below for more details). Given pairs $(X_0,X_1)$ and $(Y_0,Y_1)$ of Banach spaces which are each embedded into a common Hausdorff topological space, a linear operator $T\colon X_0+X_1\to Y_0+Y_1$, and parameters $\theta\in (0,1)$ and $1\le u\le \infty$, it was disproved in \cite{EN11} that there is $C\in(0,\infty)$ such that, for all $k_0,k_1\in\IN$,
\[
e_{k_0+k_1-1}(T\colon (X_0,X_1)_{\theta,u}\to (Y_0,Y_1)_{\theta,u})
\le C e_{k_0}^{1-\theta}(T\colon X_0 \to Y_0) e_{k_1}^{\theta}(T\colon X_1 \to Y_1).
\]
A counterexample is provided by a diagonal operator between Lorentz spaces with logarithmically decaying diagonal. 

Having mentioned or referred to a number of results concerning entropy numbers and/or Lorentz spaces, as it turns out, until now, there was no complete picture regarding entropy numbers for Lorentz space embeddings. The main contribution of this work is to close this gap with the following theorem.

\begin{thm}\label{thm:main}
	Let $0<p,q,u,v\le \infty$ and $n\in\IN$. Define the quantity
	\[
	\ell(k,n):=\frac{k}{\log(n/k+1)},\quad \log n\le k \le n.
	\]
	Then the following asymptotics hold:
	\begin{enumerate}
		\item[\emph{(0)}] For $p\neq q<\infty$, we have
		\[
		e_k(\id\colon \ell_{p,u}^n\to\ell_{q,v}^n)\asymp e_k(\id\colon \ell_{p}^n\to\ell_{q}^n), \quad k\in\IN.
		\]
	\item[\emph{(I)}] For $q<p=\infty$, we have
			\[
			e_k(\id\colon \ell_{\infty,u}^n\to \ell_{q,v}^n)
			\asymp 2^{-k/n}n^{1/q}(\log n)^{-1/u},\quad k\in\IN.
			\]
	\item[\emph{(II)}] For $p<q=\infty$, we have 
			\[
			e_k(\id\colon \ell_{p,u}^n\to \ell_{\infty,v}^n)
			\asymp 
			\begin{cases}
				1&\colon k\le \log n,\\
				\ell(k,n)^{-1/p}\log(\ell(k,n))^{1/v}&\colon \log n\le k\le n,\\
				2^{-k/n}n^{-1/p}(\log n)^{1/v}&\colon  k\ge n.
			\end{cases}
			\]
		\item[{\emph{(III)}}] For $p=q<\infty$, we have \\
			{\emph{(III.1)}} whenever $u\le v$
			\[
			e_k(\id\colon \ell_{p,u}^n\to \ell_{p,v}^n)
			\asymp 2^{-k/n},\quad k\in \IN,
			\]
			{\emph{(III.2)}} and whenever $u>v$
			\[
			e_k(\id\colon \ell_{p,u}^n\to \ell_{p,v}^n)
			\asymp
			\begin{cases}
				\log(n/k+1)^{1/v-1/u}&\colon  k\le n,\\
				2^{-k/n}&\colon  k\ge n.
			\end{cases}
			\]
	\item[\emph{(IV)}] For $p=q=\infty$, we have \\
		{\emph{(IV.1)}} whenever $u\ge v$ 
			\[
			e_k(\id\colon \ell_{\infty,u}^n\to \ell_{\infty,v}^n)
			\asymp 2^{-k/n}(\log n)^{1/v-1/u},\quad k\in \IN,
			\]
			{\emph{(IV.2)}} and whenever $u<v$ 
			\[
			e_k(\id\colon \ell_{\infty,u}^n\to \ell_{\infty,v}^n)
			\asymp 
			\begin{cases}
			1&\colon 	 k\le \log n,\\
			\log(\ell(k,n))^{1/v-1/u}&\colon 	\log n\le k\le n,\\
			2^{-k/n}(\log n)^{1/v-1/u}&\colon 	k\ge  n.\\
			\end{cases}
			\]
	\end{enumerate}
	All implicit constants are independent of $k$ and $n$, but may depend on $p,u,q$, or $v$.
\end{thm}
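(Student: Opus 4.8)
\emph{Proof strategy.}

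\textbf{The soft cases.} Cases (I), (III.1), (IV.1) and the subcase $p>q$ of (0) require no covering or packing machinery. In each of them the extremizing computation $\|\id\colon\ell_{p,u}^n\to\ell_{q,v}^n\|=e_1(\id)\asymp n^{1/q-1/p}$, corrected by a factor $(\log n)^{1/v-1/u}$ when $p=q=\infty$ and by $(\log n)^{-1/u}$ when $p=\infty>q$ (one tests the flat vectors $\sum_{i=1}^m e_i$ and the power vectors $(i^{-\alpha})_i$, using the $n$-independent embeddings $\ell_{p,u}\hookrightarrow\ell_{p',u'}$ for $p<p'$ and the comparison $\|\cdot\|_{p,u_0}\asymp\|\cdot\|_{p,u_1}$ on $\IR^n$ up to $(\log n)^{|1/u_0-1/u_1|}$) turns out to equal $\big(\vol B_{p,u}^n/\vol B_{q,v}^n\big)^{1/n}$ once one records the volume estimates $\vol(B_{p,u}^n)^{1/n}\asymp n^{-1/p}$ for $p<\infty$ and $\vol(B_{\infty,u}^n)^{1/n}\asymp(\log n)^{-1/u}$. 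Monotonicity of $(e_k)_k$ and the elementary count $e_k\ge 2^{-(k-1)/n}\big(\vol B_{p,u}^n/\vol B_{q,v}^n\big)^{1/n}$ then pin down $e_k$ for $k\le n$, while for $k\ge n$ a standard volumetric estimate for entropy numbers of identities, valid in the quasi-Banach setting, gives $e_k\asymp 2^{-k/n}\big(\vol B_{p,u}^n/\vol B_{q,v}^n\big)^{1/n}$. The plateaux $k\le\log n$ of the remaining cases are handled likewise, the lower bound $e_k\gtrsim 1$ (resp.\ $\gtrsim e_1$) for $2^{k-1}<n$ coming from pigeonholing on $e_1,\dots,e_n\in B_{p,u}^n$, which are $\asymp\|e_1\|_{q,v}$-separated in $\ell_{q,v}^n$.

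\textbf{Upper bounds in the middle regime.} It remains to treat $\log n\lesssim k\lesssim n$ in cases (0) with $p<q$, (II), (III.2) and (IV.2). For (0) I would combine \eqref{eq:classical-p-le-r} with two successive \emph{one-sided} interpolation inequalities for entropy numbers (which, unlike the two-sided version, are available): pick $p_0<p_1$ and $q_0<q_1$ with $p_1<q_0$ (possible since $p<q$), so that $\ell_{p,u}^n=(\ell_{p_0}^n,\ell_{p_1}^n)_{\eta,u}$ and $\ell_{q,v}^n=(\ell_{q_0}^n,\ell_{q_1}^n)_{\theta,v}$ with $n$-independent constants, interpolate first on the domain and then on the target; this bounds $e_k(\id\colon\ell_{p,u}^n\to\ell_{q,v}^n)$ by $\prod_{i,j}e_k(\id\colon\ell_{p_i}^n\to\ell_{q_j}^n)^{w_{ij}}$ with $w_{ij}$ the products of the two sets of interpolation weights, and since each $e_k(\id\colon\ell_{p_i}^n\to\ell_{q_j}^n)\asymp(\log(n/k)/k)^{1/p_i-1/q_j}$ the exponents collapse to $\sum_{i,j}w_{ij}(1/p_i-1/q_j)=1/p-1/q$, giving the sharp bound without spurious logarithms. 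For (II), (III.2), (IV.2) I would instead cover $B_{p,u}^n$ by truncating each element to its $s$ largest coordinates and covering, for each of the $\binom ns$ supports, the resulting $s$-dimensional ball, which yields $e_k(\id\colon\ell_{p,u}^n\to\ell_{q,v}^n)\lesssim\sigma_s(\id)+e_{k'}\big(\id\colon\ell_{p,u}^s\to\ell_{q,v}^s\big)$ whenever $k'+s\log(en/s)\lesssim k$, where $\sigma_s(\id\colon X\to Y)=\sup_{\|x\|_X\le1}\inf_{|\operatorname{supp}z|\le s}\|x-z\|_Y$; taking $s\asymp c\,\ell(k,n)$ makes the second, volumetric, term negligible. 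One then computes, via truncation plus a H\"older inequality on the tail of $(x_i^*)$ and a matching dyadic extremizer, that $\sigma_s(\id\colon\ell_{p,u}^n\to\ell_{\infty,v}^n)\asymp s^{-1/p}\log(s+1)^{1/v}$, $\sigma_s(\id\colon\ell_{p,u}^n\to\ell_{p,v}^n)\asymp\log(n/s+1)^{1/v-1/u}$ for $u>v$, and $\sigma_s(\id\colon\ell_{\infty,u}^n\to\ell_{\infty,v}^n)\asymp\log(s+1)^{1/v-1/u}$ for $u<v$; inserting $s\asymp\ell(k,n)$ and $\log(n/\ell(k,n)+1)\asymp\log(n/k+1)$ reproduces the formulas in (II), (III.2), (IV.2).

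\textbf{Lower bounds in the middle regime.} In (0) with $p<q$, (II) and (IV.2) the extremizer for $\sigma_s$ is, up to constants, a flat $\asymp s$-sparse vector, and a Gilbert--Varshamov packing of such vectors with pairwise symmetric difference of supports $\gtrsim s$ yields $\gtrsim 2^{cs\log(n/s)}$ points of $B_{p,u}^n$ that are $\gtrsim\sigma_s$-separated in $\ell_{q,v}^n$; pigeonholing with $s\asymp\ell(k,n)$ gives $e_k\gtrsim\sigma_{\lceil\ell(k,n)\rceil}(\id)$, matching the upper bound. Case (III.2) is the delicate one: the target index is finite, the extremizer is spread out, a flat-vector packing is too weak, and one needs a volume lower bound on a subspace. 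Fix $s\asymp\ell(k,n)$, $L'\asymp\log(n/k)$, partition $\{1,\dots,n\}$ into $s$ groups of size $\asymp n/s$, split each group into dyadic sub-blocks $B_{m,l}$ with $|B_{m,l}|\asymp 2^l$, $1\le l\le L'$, and set $E=\operatorname{span}\{\mathbf 1_{B_{m,l}}\}$, so $\dim E\asymp sL'\asymp k$. Since coordinate projections are contractions on $\ell_{p,v}^n$, restriction to $E$ and the elementary count give $e_k(\id\colon\ell_{p,u}^n\to\ell_{p,v}^n)\gtrsim\big(\vol(B_{p,u}^n\cap E)/\vol(B_{p,v}^n\cap E)\big)^{1/\dim E}$; by the localization principle $\big\|\sum_m y_m\big\|_{p,w}\asymp\big\|(\|y_m\|_{p,w})_m\big\|_{\ell_p^s}$ for $y_m$ supported on the $m$-th group, together with the identification of block-constant vectors with weighted $\ell_w^{L'}$-vectors, both sections are, up to constants, mixed-norm balls $\{(c_{m,l})\colon\|(\|\tilde D c_{m,\cdot}\|_{\ell_w^{L'}})_m\|_{\ell_p^s}\le C\}$ with the \emph{same} diagonal weight $\tilde D$, so in the ratio the weight and the factor $s^{-1/p}$ from the mixed-norm volume cancel, leaving $\big(\vol B_{\ell_u^{L'}}/\vol B_{\ell_v^{L'}}\big)^{1/L'}\asymp (L')^{1/v-1/u}\asymp\log(n/k+1)^{1/v-1/u}$. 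For $k\le\log n$ the same argument with a single $(\asymp\log n)$-dimensional block subspace $\operatorname{span}\{\mathbf 1_{B_l}\}$ gives $e_k\gtrsim(\log n)^{1/v-1/u}$.

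\textbf{Auxiliary facts and main obstacle.} To be established first: the volume asymptotics for $B_{p,u}^n$ and for mixed-norm balls $B_{\ell_p^s(\ell_w^{L'})}$; the Lorentz quasi-norm comparisons and $n$-independent embeddings driving the operator-norm computations; the volumetric estimate $e_k\asymp 2^{-k/n}(\vol B_X/\vol B_Y)^{1/n}$ for $k\ge n$ and symmetric quasi-Banach $X,Y$, together with subspace-restriction monotonicity of $(e_k)_k$; the one-sided interpolation inequality for entropy numbers and the identity $\ell_{p,u}^n=(\ell_{p_0}^n,\ell_{p_1}^n)_{\theta,u}$; and the Lorentz localization principle. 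The main obstacle is clearly the lower bound in case (III.2): producing a subspace of dimension $\asymp k$ on which the embedding already exhibits the logarithmic loss, and running the mixed-norm volume computation so that the spurious $s^{-1/p}$-factors cancel, both rest on the dyadic super-block construction and the localization estimate; a secondary but persistent nuisance is the constant-chasing in the transition zones $k\asymp\log n$ and $k\asymp n$, where one must check $\log(n/\ell(k,n)+1)\asymp\log(n/k+1)$ and that the adjacent regime formulas agree up to constants.
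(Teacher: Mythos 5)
Most of your plan runs parallel to the paper's proof: the volume/operator-norm treatment of (I), (III.1), (IV.1) and of all cases for $k\ge n$, the interpolation route for (0), the sparse-approximation upper bounds in the middle regime, and the combinatorial packings for the lower bounds in (II) and (IV.2) are exactly the paper's ingredients (the last two appear there as the alternative proof via best $s$-term approximation and Lemma~\ref{lem:combinatorial}). Your mild deviations — doing (0) with $p>q$ by volumes and the lower bound of (0) with $p<q$ by a flat-sparse packing instead of reverse interpolation — are fine.

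The genuine gap is precisely where you predicted the main obstacle: the lower bound in (III.2) for $\log n\le k\le n$. Your argument hinges on the ``localization principle'' $\bigl\|\sum_m y_m\bigr\|_{p,w}\asymp\bigl\|(\|y_m\|_{p,w})_m\bigr\|_{\ell_p^s}$ for disjointly supported $y_m$, but this is false for Lorentz quasi-norms when $w\neq p$, and it fails by exactly the logarithmic factor that case (III.2) is about. Take $y_m=m^{-1/p}e_{i_m}$ (or, inside your subspace $E$, the coefficients $c_{m,1}=m^{-1/p}$, $c_{m,l}=0$ for $l\ge2$): then $\bigl\|\sum_m y_m\bigr\|_{p,w}\asymp(\log s)^{1/w}$ while $\bigl(\sum_m\|y_m\|_{p,w}^p\bigr)^{1/p}\asymp(\log s)^{1/p}$. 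Disjoint sums in $\ell_{p,w}$ only satisfy an upper $\min\{p,w\}$-estimate and a lower $\max\{p,w\}$-estimate, so the sections $B_{p,u}^n\cap E$ and $B_{p,v}^n\cap E$ are not uniformly equivalent to mixed-norm balls of the form $\ell_p^s(\ell_w^{L'}(\tilde D))$; in particular, when $u<p$ the inclusion of the mixed-norm ball into a constant multiple of $B_{p,u}^n\cap E$ — which is what your lower bound on $\vol(B_{p,u}^n\cap E)$ requires — genuinely fails. Consequently the advertised cancellation of the weight and of the $s^{-1/p}$ factor in the volume ratio is not justified, and the bound $e_k\gtrsim\log(n/k+1)^{1/v-1/u}$ does not follow as written. (Your single-group argument for $k\le\log n$ is unaffected, since with one group the dyadic identification \eqref{eq:dyadic} suffices and no cross-group localization is needed.) The paper avoids this issue altogether: its main proof gets the (III.2) lower bound from the two-sided Edmunds--Netrusov characterization $e_k\asymp u(\ell^n_{p,u},\ell^n_{p,v},s)$ with the explicit test vector $s^{-1/p}\sum_{i\le s}e_i+\sum_{i>s}i^{-1/p}e_i$, and its alternative proof uses a packing of multi-scale rescaled indicators built from Lemma~\ref{lem:combinatorial} together with \eqref{eq:dyadic}, as in \cite[Theorem~10]{DV20} — both routes replace the section-volume computation by separation arguments and are the natural way to repair your argument.
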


Theorem~\ref{thm:main} shows that, asymptotically, entropy numbers of embeddings between Lorentz spaces exhibit a rich behavior with additional logarithms appearing if $p=q$ or if $p$ or $q$ are infinite.

\begin{remark}\phantom{}
Let us elaborate on previously known results in order to contextualize our contribution.   

\begin{enumerate}
	\item The case $p=u$ and $q=v$ reduces to $\id\colon \ell_p^n\to \ell_q^n$, see \eqref{eq:classical-p-le-r} and \eqref{eq:classical-p-ge-r} above.
	\item The case $0<p\neq q<\infty$ was already stated in \cite[Eq. (29)]{DV20} and also in \cite{EN11}, where the authors actually refer to \cite{EN98}. Since we could not locate a proof in the literature, we provide one ourselves using interpolation. Note that Kaewtem~\cite[Corollary 4.3]{Kae17} proved the special case of $p<q$ and $k\ge n$.
	\item The case $0<p=q=v< \infty$ and $u=\infty$ was proven in \cite[Theorem 10]{DV20}.
\end{enumerate}
\end{remark} 

The following result provides asymptotics for the norm of the natural embedding between Lorentz sequence spaces.  
Using the equivalence between $e_1(T)$ and $\|T\|$ given through the norming property of entropy numbers, it coincides with the choice of $k=1$ in Theorem~\ref{thm:main}.
As we shall need it in the proof of Theorem \ref{thm:main}, we state (and prove) it separately.
Here and in what follows, we write $(x)_+:=\max\{0,x\}$ for the positive part of $x\in\IR$.

\begin{prop}\label{pro:norm-emb}
Let $n\in\N$ and $0<p,q,u,v\le \infty$. We have 
\[
\|\id\colon \ell_{p,u}^n\to \ell_{q,v}^n\|
\asymp 
\begin{cases}
n^{(1/q-1/p)_+}&\colon p\neq q<\infty,\\
n^{1/q}(\log n)^{-1/u}&\colon q<p=\infty,\\
1&\colon p<q=\infty,\\
(\log n)^{(1/v-1/u)_+}&\colon p=q,\\
\end{cases}
\]
where the implicit constants are independent of the dimension $n$.
\end{prop}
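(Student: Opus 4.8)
The plan is to prove Proposition~\ref{pro:norm-emb} by reducing the norm computation to elementary estimates on non-increasing rearrangements, treating the four cases separately and exploiting symmetry and scaling. The key tool is that, since all the spaces involved are $1$-symmetric, the norm $\|\id\colon \ell_{p,u}^n\to\ell_{q,v}^n\|$ is attained (or nearly attained) on non-increasing nonnegative sequences, and moreover on the ``block'' vectors $x^{(m)}:=\sum_{i=1}^m e_i$ for $1\le m\le n$, which have $\|x^{(m)}\|_{p,u}\asymp m^{1/p}$ (with a logarithmic correction when $p=\infty$, namely $\|x^{(m)}\|_{\infty,u}=\|i^{-1/u}\|_{\ell_u^m}\asymp (\log m)^{1/u}$ if $u<\infty$ and $=1$ if $u=\infty$). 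Computing $\|x^{(m)}\|_{q,v}$ similarly and optimizing the ratio over $m\in\{1,\dots,n\}$ will produce the stated lower bounds; the heart of the matter is then to show these block vectors are extremal, i.e. to prove the matching upper bound.

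First I would record the one-dimensional reductions. For the case $p<q=\infty$, one has $\|x\|_{\infty,v}=\|x\|_\infty\le\|x\|_{p,u}$ up to a constant (since $x_1^*\le \|x\|_{p,u}$ when normalized suitably, using $1^{1/p-1/u}x_1^*\le\|i^{1/p-1/u}x_i^*\|_u$ for $u<\infty$ and directly for $u=\infty$), and the constant $1$ is sharp by testing on $e_1$; this gives the third line. For $p=q$ (finite or infinite), after pulling out the common factor $i^{1/p}$ in both quasi-norms, the problem becomes the norm of $\id\colon \ell_u^n\to\ell_v^n$ against the weight — more precisely the norm reduces to estimating $\sup \|i^{-1/u}a_i^*\|_v/\|i^{-1/u}a_i^*\|_u$ type quantities, which by the monotone rearrangement is governed by the embedding $\ell_u^{(w)}\hookrightarrow\ell_v^{(w)}$ with weight $w_i=i^{1/p-1/u}$ respectively $w_i=i^{-1/u}$; when $u\le v$ this norm is $\asymp 1$ (the weighted space with larger exponent is smaller here because the weights are summable-type in the right way — for $p<\infty$ the factor $i^{1/p}$ dominates and makes both comparable, for $p=\infty$ one uses $\|i^{-1/u}\|_v\le\|i^{-1/u}\|_u$-style monotonicity), and when $u>v$ one gets the factor $(\log n)^{1/v-1/u}$ (finite $p$) by Hölder's inequality applied to $\sum_{i=1}^n(i^{1/p-1/v}x_i^*)^v = \sum (i^{1/p-1/u}x_i^*)^v i^{v(1/u-1/v)}$ with exponents $u/v$ and its conjugate, and the matching lower bound from the block vector $x^{(n)}$. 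The case $q<p=\infty$ is handled by the same weighted-embedding philosophy: $\|x\|_{q,v}=\|i^{1/q-1/v}x_i^*\|_v$ and $\|x\|_{\infty,u}=\|i^{-1/u}x_i^*\|_u\ge c(\log n)^{1/u}\|x\|_\infty$ uniformly, so one bounds $\|x\|_{q,v}\le\|x\|_\infty\|i^{1/q-1/v}\|_v\asymp n^{1/q}\|x\|_\infty\lesssim n^{1/q}(\log n)^{-1/u}\|x\|_{\infty,u}$, with sharpness again from $x^{(n)}$.

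Finally, the first line $p\neq q<\infty$: here I would invoke that for fixed finite $p$ the Lorentz quasi-norm $\|\cdot\|_{p,u}^n$ is equivalent, with constants depending only on $p,u$, to $\|\cdot\|_{p,p}^n=\|\cdot\|_p^n$ — this is a standard fact (equivalence of Lorentz quasi-norms with the same first index up to dimension-independent constants, which also follows from the interpolation description in Section~\ref{sec:interpolation}) — so $\|\id\colon\ell_{p,u}^n\to\ell_{q,v}^n\|\asymp\|\id\colon\ell_p^n\to\ell_q^n\|\asymp n^{(1/q-1/p)_+}$, the latter being the classical computation (for $p<q$, $\|x\|_q\le\|x\|_p$; for $p>q$, Hölder gives $\|x\|_q\le n^{1/q-1/p}\|x\|_p$, both sharp on $e_1$ resp.\ $x^{(n)}$). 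The main obstacle is organizing the $p=\infty$ and $p=q$ cases so that the logarithmic factors come out with the correct exponents and with constants genuinely independent of $n$; in particular the lower bounds require exhibiting the right extremal vector in each regime ($e_1$ when the answer is a constant, $x^{(n)}$ when a power of $n$ or $\log n$ appears) and checking that $\|x^{(n)}\|_{p,u}\asymp(\log n)^{1/u}$ for $p=\infty$ is used consistently on both sides, while the upper bounds in the $u>v$, $p=q$ subcase hinge on the correct application of Hölder's inequality with the weight $i^{1/u-1/v}$.
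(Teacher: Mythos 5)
Your overall strategy (symmetry, testing on structured vectors for lower bounds, weighted-sum/H\"older estimates for upper bounds) is the right one, but several of the concrete steps are wrong, and they are exactly the points where the logarithms have to be produced or avoided. The main gaps are in the upper bounds of the two cases involving $p=\infty$ or $q=\infty$, where you treat the Lorentz quasi-norm with first index $\infty$ as if it were the sup norm. For $v<\infty$ one has $\|x\|_{\infty,v}=\bigl(\sum_{i\le n} i^{-1}(x_i^*)^v\bigr)^{1/v}$, which is \emph{not} $\|x\|_\infty$ (for the all-ones vector it is $\asymp(\log n)^{1/v}$); so in the case $p<q=\infty$ your argument, which only uses $x_1^*\le\|x\|_{p,u}$, gives $\|x\|_{\infty,v}\lesssim(\log n)^{1/v}\|x\|_{p,u}$ rather than the claimed $\lesssim\|x\|_{p,u}$. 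Similarly, in the case $q<p=\infty$ the inequality $\|x\|_{\infty,u}\ge c\,(\log n)^{1/u}\|x\|_\infty$ is false (take $x=e_1$), so the chain $\|x\|_{q,v}\le n^{1/q}\|x\|_\infty\lesssim n^{1/q}(\log n)^{-1/u}\|x\|_{\infty,u}$ breaks down: factoring through $\|x\|_\infty$ is too lossy. In both cases what is needed is the decay of the whole rearrangement, $x_i^*\lesssim i^{-1/p}\|x\|_{p,u}$ resp.\ $x_i^*\lesssim(\log i)^{-1/u}\|x\|_{\infty,u}$ (Lemma~\ref{lem:x-star-bound}), inserted \emph{inside} the weighted sums, e.g.\ $\sum_{i\le n} i^{v/q-1}(\log i)^{-v/u}\asymp n^{v/q}(\log n)^{-v/u}$ and $\sum_{i\le n} i^{-v/p-1}\lesssim 1$; this is how the paper's proof proceeds.

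Two further problems. For $p\neq q<\infty$ you invoke an equivalence $\|\cdot\|_{p,u}\asymp\|\cdot\|_{p}$ with constants independent of $n$; this is false --- the equivalence constants grow like a power of $\log n$ (indeed the fourth line of Proposition~\ref{pro:norm-emb} itself gives $\|\id\colon\ell_{p,u}^n\to\ell_{p,v}^n\|\asymp(\log n)^{1/v-1/u}$ for $u>v$), so this reduction would contaminate the answer with spurious logarithmic factors and in particular cannot yield the bound $\asymp 1$ when $p<q$. Only the one-sided nesting $\|x\|_{p,v}\lesssim\|x\|_{p,u}$ for $u\le v$, i.e.\ \eqref{eq:ordering}, is dimension-free; the paper uses it to reduce the upper bound to the weakest source norm $u=\infty$ and then estimates $\|x\|_{q,v}$ against $\|x\|_{p,\infty}$ directly. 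Finally, in the subcase $p=q<\infty$, $u>v$, the block vector $x^{(n)}$ is not extremal: $\|x^{(n)}\|_{p,u}\asymp\|x^{(n)}\|_{p,v}\asymp n^{1/p}$, so the ratio is $\asymp 1$ and does not witness $(\log n)^{1/v-1/u}$; the correct test vector is $x=(k^{-1/p})_{k=1}^n$, with $\|x\|_{p,u}=H_n^{1/u}$ and $\|x\|_{p,v}=H_n^{1/v}$. Your H\"older argument for the matching upper bound in that subcase is correct and coincides with the paper's, and your lower-bound choices in the other cases ($e_1$ when the answer is constant, the all-ones vector when a power of $n$ or of $\log n$ appears) are fine.
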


In the remainder of this work, we shall present the proof of Theorem~\ref{thm:main} and generalize techniques developed for $\ell_p$-spaces to our Lorentz space setting. As a general rule, bounds for $k\ge n$ and bounds with no case distinction on $k$ are proven with volume techniques, prepared in Section~\ref{sec:volume} more generally for embeddings between quasi-Banach spaces. Upper bounds in the remaining cases are proven via sparse approximation, and monotonicity arguments. Whenever convenient, we shall use interpolation (see Section~\ref{sec:interpolation}). Finally, in Section~\ref{sec:sparse} we state a characterization of entropy numbers of embeddings between finite-dimensional symmetric quasi-Banach spaces in terms of best $s$-term approximation numbers and present an alternative proof of Theorem~\ref{thm:main}.

\textbf{Notation.} Given sequences $(a_k)_{k\in\IN}$ and $(b_k)_{k\in\IN}$ of non-negative real numbers, we write $a_k\lesssim b_k$ if there exists an implicit constant $C\in(0,\infty)$ such that $a_k\le Cb_k$ for all $k\in\IN$. Similarly, we use $a_k\gtrsim b_k$ if $b_k\lesssim a_k$ and $a_k\asymp b_k$ if additionally $a_k\lesssim b_k$ holds. In the following, implicit constants will never depend on $k$ and $n$ and may depend on parameters $p,q,u$ or $v$. With respect to this notation and due to $\log 1=0$, we want to point out that sometimes it may be necessary to replace $\log n$ by $\log(n+1)$. We omit this for the sake of readability.

\section{Entropy numbers of embeddings between quasi-Banach spaces}\label{sec:volume}

Here and in the following, we provide some background information on quasi-normed spaces. Let $X$ be a linear space. A mapping $\|\cdot\|\colon X\to [0,\infty)$ is called a quasi-norm if it satisfies the axioms of a norm except that the triangle inequality is weakened to
\begin{equation} \label{eq:quasi-norm}
	\|x+y\|\le C(\|x\|+\|y\|)\quad \text{for all }x,y\in X,
\end{equation}
where $C\ge 1$ is some constant. If \eqref{eq:quasi-norm} is replaced by
\begin{equation} \label{eq:p-norm}
	\|x+y\|^p\le \|x\|^p+\|y\|^p\quad \text{for all }x,y\in X
\end{equation}
for some $0<p\le 1$, then $\|\cdot\|$ is called a $p$-norm. It follows from Hölder's inequality that every $p$-norm is a quasi-norm with constant $C=2^{1/p-1}$. In fact, by the Aoki-Rolewicz theorem \cite{Aok42,Rol57} every quasi-norm with constant $C\ge 1$ is equivalent to a $p$-norm with $0<p\le 1$ chosen to satisfy $C=2^{1/p-1}$. We say that two quasi-norms $\|\cdot\|_X$ and $\|\cdot\|_Y$ on $X$ are equivalent if and only if there exist $c,C\in(0,\infty)$ such that
\[
c\|x\|_X\le \|x\|_Y\le C\|x\|_X\quad \text{for all }x\in X. 
\]
Note that in this case we may replace $\|\cdot\|_X$ by $\|\cdot\|_Y$ in entropy estimates at the cost of multiplicative constants. Whenever we endow $X$ with a quasi-norm ($p$-norm) and $X$ is complete with respect to the induced distance, it is called a quasi-Banach space ($p$-Banach space). It is useful to note that every $p$-Banach space is also an $r$-Banach space whenever $0<r<p\le 1$, simply because $(a^p+b^p)^{1/p}\le (a^r+b^r)^{1/r}$ for $a,b\ge 0$.

A basis $\{e_1,e_2,\dots\}$ of a quasi-Banach space $(X,\|\cdot\|_X)$ is called $1$-unconditional (or just unconditional) if, for all $x=\sum_{i=1}^{\infty}a_i e_i\in X$ and all sequences of signs $\varepsilon_1,\varepsilon_2,\dots \in\{-1,1\}$ it holds that
\[
\Big\|\sum_{i=1}^{\infty}a_ie_i\Big\|_{X}
=\Big\|\sum_{i=1}^{\infty}\varepsilon_i a_ie_i\Big\|_{X}
\]
and $1$-symmetric (or just symmetric) if, moreover, for all permutations $\pi$ of $\IN$ it holds that
\[
\Big\|\sum_{i=1}^{\infty}a_ie_i\Big\|_{X}
=\Big\|\sum_{i=1}^{\infty}\varepsilon_i a_{\pi(i)}e_i\Big\|_{X}.
\]
The associated fundamental function is defined by
\[
\varphi_X(n)
:=\Big\|\sum_{i=1}^{n}e_i\Big\|_X,\quad n\in\IN.
\]
We shall say that a quasi-Banach space is symmetric if it admits a symmetric basis.

In the following, we shall study entropy numbers of embeddings between $n$-dimensional symmetric quasi-Banach spaces. For this purpose we will need the following monotonicity/lattice property which also holds in the case of an unconditional basis. Its proof was kindly provided to us by G.~Schechtman.

\begin{lem}\label{lem:lattice}
For every quasi-Banach space $X$ with an unconditional basis $\{e_i\}_{i\in\N}$, any $n\in\IN$ and all scalars $a_1,\dots,a_n$ and $b_1,\dots,b_n$ satisfying $|b_i|\le |a_i|$ for all $1\le i\le n$, we have
\[
\Big\| \sum_{i=1}^{n}b_i e_i\Big\|_X
\le K_X\Big\| \sum_{i=1}^{n}a_i e_i\Big\|_X,
\]
where $K_X\ge 1$ depends only on the quasi-norm constant.
\end{lem}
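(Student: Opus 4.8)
The plan is to reduce the general unconditional case to the real-scalar situation where one can directly compare $\sum b_i e_i$ with $\sum a_i e_i$ by sign-flips and averaging. First I would observe that it suffices to prove the inequality when each $b_i$ has the same sign as $a_i$ (or $b_i=0$ when $a_i=0$), since by $1$-unconditionality replacing $b_i$ by $\varepsilon_i b_i$ for any choice of signs does not change $\|\sum b_i e_i\|_X$, and likewise we may assume all $a_i \ge 0$. So the task becomes: given $0 \le b_i \le a_i$ for $1 \le i \le n$, bound $\|\sum b_i e_i\|_X$ in terms of $\|\sum a_i e_i\|_X$.

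The key step is the averaging trick. Write $b_i = \lambda_i a_i$ with $\lambda_i \in [0,1]$ (setting $\lambda_i = 0$ when $a_i = 0$). The point is that $(\lambda_1,\dots,\lambda_n) \in [0,1]^n$ lies in the convex hull of the vertices $\{-1,1\}^n$ \emph{shifted and scaled}; more precisely, each $\lambda_i \in [0,1]$ can be written as $\lambda_i = \tfrac12(1 + \delta_i)$ with $\delta_i \in [-1,1]$, and $(\delta_1,\dots,\delta_n)$ is a convex combination $\sum_j \mu_j \varepsilon^{(j)}$ of sign vectors $\varepsilon^{(j)} \in \{-1,1\}^n$. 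Hence
\[
b_i = \tfrac12 a_i + \tfrac12 \sum_j \mu_j \varepsilon_i^{(j)} a_i .
\]
Therefore $\sum_i b_i e_i = \tfrac12 \sum_i a_i e_i + \tfrac12 \sum_j \mu_j \big(\sum_i \varepsilon_i^{(j)} a_i e_i\big)$, a (quasi-)convex combination of the vectors $\sum_i a_i e_i$ and $\sum_i \varepsilon_i^{(j)} a_i e_i$. Now apply the quasi-triangle inequality in $X$: passing to an equivalent $p$-norm via Aoki--Rolewicz (with $0 < p \le 1$ determined by the quasi-norm constant), one gets $\|\sum_i b_i e_i\|_X^p \lesssim \big(\tfrac12\big)^p \|\sum_i a_i e_i\|_X^p + \big(\tfrac12\big)^p \sum_j \mu_j^p \|\sum_i \varepsilon_i^{(j)} a_i e_i\|_X^p$, and by $1$-unconditionality every term $\|\sum_i \varepsilon_i^{(j)} a_i e_i\|_X = \|\sum_i a_i e_i\|_X$. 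Using $\sum_j \mu_j = 1$ and $\mu_j^p \le \mu_j$ (since $p \le 1$), the sum $\sum_j \mu_j^p$ is at most the number of vertices used, which in the worst case is bounded in terms of $n$; to avoid any $n$-dependence one should instead iterate the two-point version of the argument coordinatewise (replace one $a_i$ at a time by $b_i = \tfrac{a_i+b_i}{2} + \varepsilon_i \tfrac{a_i - b_i}{2}$, i.e. a genuine two-term average of vectors whose $X$-norms are both $\|\sum a_j e_j\|_X$ after sign-normalization), which yields $K_X \le (2 \cdot 2^{1-p})^{n}$... no — the honest route is the one-shot barycenter argument above phrased so the constant depends only on the quasi-norm constant.

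The cleanest way to get a constant depending \emph{only on the quasi-norm constant} is: since each $\delta_i \in [-1,1]$, the vector $\delta = (\delta_i)$ is in the convex hull of $\{-1,1\}^n$, so $b = \tfrac12(a + D a)$ where $Da$ has entries $\delta_i a_i$ and $Da$ itself is a convex combination of sign-flips of $a$. By the quasi-convexity estimate valid in any $C$-quasi-normed space — namely $\|\sum_{j=1}^{N} \theta_j z_j\| \le 4C \max_j \|z_j\|$ whenever $\theta_j \ge 0$, $\sum \theta_j = 1$ (a standard consequence of Aoki--Rolewicz, with $4C$ replaceable by the sharper $2^{1/p}$), applied with all $z_j$ having equal $X$-norm $\|\sum a_i e_i\|_X$, we conclude $\|Da\|_X \lesssim_C \|\sum a_i e_i\|_X$ and then $\|\sum b_i e_i\|_X \le C(\tfrac12 \|\sum a_i e_i\|_X + \tfrac12 \|Da\|_X) \lesssim_C \|\sum a_i e_i\|_X$, with the implied constant $K_X$ a function of $C$ alone. \textbf{The main obstacle} is precisely this bookkeeping: naively expanding into $2^n$ sign vectors produces a constant blowing up with $n$, and one must invoke the right form of the quasi-convexity inequality (equivalently, pass to a $p$-norm first and use that the barycenter of unit vectors has $p$-norm bounded by a constant) to keep $K_X$ dimension-free. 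Everything else — the reduction to nonnegative aligned scalars, the representation $b_i = \tfrac12(1+\delta_i)a_i$ — is routine.
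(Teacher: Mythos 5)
Your reduction to $0\le b_i\le a_i$ and the representation $b_i=\tfrac12(1+\delta_i)a_i$ are fine, and the two-term averaging germ is the right one, but the step you lean on to make the constant dimension-free is false. The claimed ``quasi-convexity estimate'' $\bigl\|\sum_{j=1}^{N}\theta_j z_j\bigr\|\le 4C\max_j\|z_j\|$ for convex weights, with $4C$ (or $2^{1/p}$) depending only on the quasi-norm constant, does not hold in a quasi-normed space: passing to an equivalent $p$-norm, the best general bound is $\bigl(\sum_j\theta_j^p\bigr)^{1/p}\max_j\|z_j\|$, which for uniform weights is $N^{1/p-1}\max_j\|z_j\|$. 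Concretely, in $X=\ell_p^N$ with $p<1$ take $z_j=e_j$ and $\theta_j=1/N$: the barycenter has norm $N^{1/p-1}\to\infty$ while the quasi-norm constant stays fixed. The unit ball of a quasi-Banach space need not be even approximately convex, and in your application the circularity is exact: the convex hull of the sign-flips $\{\sum_i\varepsilon_i a_i e_i\}$ is precisely the order box $\{b:|b_i|\le|a_i|\}$, so asserting that this hull sits inside $K_X\|\sum_i a_ie_i\|_X\,B_X$ with $K_X$ depending only on the quasi-norm constant is the lemma itself, not a known general principle. (Your discarded coordinatewise iteration also blows up in $n$, as you noted, so as written the proposal has no valid route to a dimension-free $K_X$.)

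The device that repairs this—and is the paper's proof—is a dyadic expansion in place of a one-shot convex combination: write $|b_i|=|a_i|\sum_{j=1}^{\infty}\delta_{ij}2^{-j}$ with $\delta_{ij}\in\{0,1\}$, so that
\[
\sum_{i=1}^{n}b_ie_i \;=\;\sum_{j=1}^{\infty}2^{-j}\sum_{i=1}^{n}\delta_{ij}a_ie_i ,
\]
after absorbing signs by unconditionality. Each inner vector is a genuine two-term average, $\sum_i\delta_{ij}a_ie_i=\tfrac12\sum_i a_ie_i+\tfrac12\sum_i\varepsilon_{ij}a_ie_i$ with $\varepsilon_{ij}\in\{-1,1\}$, so its $p$-norm is at most $2^{(1-p)/p}\bigl\|\sum_i a_ie_i\bigr\|$ by unconditionality and the two-term $p$-triangle inequality—this is exactly where your $\tfrac12(1+\delta)$ idea is used legitimately. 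The outer sum then has geometrically decaying weights, and the $p$-triangle inequality gives $\bigl\|\sum_i b_ie_i\bigr\|^p\le 2^{1-p}\bigl(\sum_j 2^{-jp}\bigr)\bigl\|\sum_i a_ie_i\bigr\|^p$, whose convergent factor depends only on $p$, hence only on the quasi-norm constant via Aoki--Rolewicz. In short: replace the $N$-term barycenter (whose cost grows with $N$) by an infinite sum with weights $2^{-j}$, each term of which is only a two-term average; that is the missing idea.
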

\begin{proof}
By the Aoki-Rolewicz theorem there exists $0<p\le 1$ and a corresponding $p$-norm $\|\cdot\|$ on $X$ which is equivalent to $\|\cdot\|_X$. For $n\in\IN$ let $a_1,\dots,a_n$ and $b_1,\dots,b_n$ be scalars such that $|b_i|\le |a_i|$ for all $1\le i\le n$. Because of the quasi-norm equivalence, it is sufficient to show that
\begin{equation} \label{eq:monotonicity-pnorm}
\Big\|\sum_{i=1}^{n}b_ie_i\Big\|
\le C_p\Big\|\sum_{i=1}^{n}a_ie_i\Big\|,
\end{equation}
where $C_p\in(0,\infty)$ depends only on $p$.  For any $i\in\{1,\dots,n\}$, let us write 
  \[
    |b_i|=|a_i|\sum_{j=1}^{\infty}\delta_{ij} 2^{-j}
  \]   
for some suitable sequence $\delta_{ij}\in\{0,1\}$, $j\in\mathbb N$. Then, for all $j\in\mathbb N$, 
\begin{align*}
	\Big\|2^{-j}\sum_{i=1}^{n}a_i\delta_{ij}e_i\Big\|
	\le 2^{-j}\Big(\frac{1}{2^p}+\frac{1}{2^p}\Big)^{1/p}\Big\|\sum_{i=1}^{n}a_ie_i\Big\|,
\end{align*}
where we used that $\delta_{ij}=\frac{1}{2}\varepsilon_{ij}+\frac{1}{2}$ for some $\varepsilon_{ij}\in \{-1,1\}$ and that $\|\cdot\|$ can also be assumed to be unconditional (see e.g.~the proof of~\cite[Proposition~1.c.5]{K1986}). Therefore, we obtain
\begin{align*}
\Big\|\sum_{i=1}^{n}b_ie_i\Big\|
&= \Big\|\sum_{j=1}^{\infty}2^{-j}\sum_{i=1}^{n}a_i\delta_{ij}e_i\Big\|\\
&\le 2^{\frac{1-p}{p}}\Big(\sum_{j=1}^{\infty}2^{-jp}\Big)^{1/p}\Big\|\sum_{i=1}^{n}a_ie_i\Big\|,
\end{align*}
which yields \eqref{eq:monotonicity-pnorm} with $C_p:= \frac{1}{2}\Big(\frac{2}{2^p-1}\Big)^{1/p}$. This completes the proof.
\end{proof}

\begin{remark}
If $X$ is a Banach space, then Lemma~\ref{lem:lattice} holds with $K_X=1$. This follows from \cite[Theorem 2]{BSW61} or \cite[Proposition 3.1.3]{AK06}. We further remark that Lemma~\ref{lem:lattice} is closely related to the so-called \emph{lattice property} of (quasi-)Banach spaces, which is widely used in functional analysis, see, e.g., \cite[Section 13.1]{Cal64} or \cite[(P2) in Definition 1.1.1]{BS88}. 
\end{remark}

Let $n\in\IN$ and $X, Y$ be $n$-dimensional quasi-Banach spaces with normalized symmetric bases $\{e_i\}_{i=1}^n$ and $\{f_i\}_{i=1}^n$, respectively. We will study the behavior of the entropy numbers of the embedding
\[
\id\colon X\to Y, \quad \id\Big(\sum_{i=1}^{n}x_ie_i\Big)=\sum_{i=1}^{n}x_if_i, \quad (x_1,\dots,x_n)\in\IR^n.
\]
For this we will use the following elementary lemma relating the operator norm of the natural identity between $\ell_\infty^n$ and a symmetric quasi-Banach space $X$ with the fundamental function of the space.

\begin{lem}\label{lem:monotone-fundamental}
Let $n\in\N$ and $X$ be an $n$-dimensional quasi-Banach space with a symmetric basis $\{e_i\}_{i=1}^n$. Then
\[
\varphi_X(n)
\le \|\id\colon \ell_{\infty}^n\to X\|
\le K_X \varphi_X(n),
\]
where we identify $\ell_{\infty}^n=({\rm span}\{e_1,\dots,e_n\},\|\cdot\|_{\infty})$ and $K_X\geq 1$ is as in Lemma~\ref{lem:lattice}.
\end{lem}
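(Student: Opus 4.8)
The plan is to prove the two inequalities separately, exploiting the symmetry of the basis and the lattice-type estimate from Lemma~\ref{lem:lattice}. For the lower bound, I would simply test the operator on the all-ones vector $x=\sum_{i=1}^n e_i$, which lies in the unit ball of $\ell_\infty^n$ since $\|x\|_\infty=1$. Then $\|\id(x)\|_X=\|\sum_{i=1}^n e_i\|_X=\varphi_X(n)$, so by definition of the operator norm $\|\id\colon \ell_\infty^n\to X\|\ge \varphi_X(n)$. This direction is essentially immediate.

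For the upper bound, let $x=(x_1,\dots,x_n)\in\IR^n$ with $\|x\|_\infty\le 1$, so that $|x_i|\le 1$ for all $i$. I would apply Lemma~\ref{lem:lattice} with $b_i=x_i$ and $a_i=1$ (note $|b_i|=|x_i|\le 1=|a_i|$), which gives
\[
\Big\|\sum_{i=1}^n x_i e_i\Big\|_X\le K_X\Big\|\sum_{i=1}^n e_i\Big\|_X=K_X\varphi_X(n).
\]
Taking the supremum over all such $x$ yields $\|\id\colon \ell_\infty^n\to X\|\le K_X\varphi_X(n)$, completing the proof. The only subtlety is to make sure the hypotheses of Lemma~\ref{lem:lattice} are met — in particular that $X$ here is $n$-dimensional with a symmetric (hence unconditional) basis, which is part of the standing assumptions — and that the constant $K_X$ is the same one appearing there.

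I do not expect any genuine obstacle here: the statement is a direct bookkeeping consequence of Lemma~\ref{lem:lattice} together with the definition of the fundamental function. If anything, the only thing to be careful about is the normalization of the basis and the identification $\ell_\infty^n=(\mathrm{span}\{e_1,\dots,e_n\},\|\cdot\|_\infty)$, so that coordinates of $x$ with respect to $\{e_i\}$ coincide with coordinates with respect to the canonical basis of $\ell_\infty^n$; this is exactly the identification stated in the lemma.
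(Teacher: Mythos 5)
Your proof is correct and coincides with the paper's argument: the upper bound via Lemma~\ref{lem:lattice} applied with $a_i=1$ and $|b_i|=|x_i|\le 1$, and the lower bound by testing on $x=(1,\dots,1)$. Nothing further is needed.
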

\begin{proof}
By Lemma~\ref{lem:lattice} it holds that
\[
\|\id\colon \ell_{\infty}^n\to X\|
=\sup_{x\in B_{\infty}^n}\Big\|\sum_{i=1}^{n}x_i e_i\Big\|_X
\le K_X \Big\|\sum_{i=1}^{n}e_i\Big\|_X,
\]
and for the lower bound we specify $x=(1,\dots,1)$.
\end{proof}

The following proposition taken from \cite[Theorem~4.2]{Kae17} generalizes \cite[Lemma 4]{Sch84} to quasi-Banach spaces. 

\begin{prop}\label{pro:fundamental}
Let $n\in\IN$ and $X, Y$ be $n$-dimensional quasi-Banach spaces with quasi-norm constants $C_X,C_Y\ge 1$ and normalized symmetric bases $\{e_i\}_{i=1}^n$ and $\{f_i\}_{i=1}^n$, respectively. Then
\begin{equation} \label{eq:ent-fundamental}
e_k(\id\colon X\to Y)\asymp 2^{-k/n}\frac{\varphi_Y(n)}{\varphi_X(n)},\quad k\ge n,
\end{equation}
where the implicit constants depend only on $\max\{C_{X},C_{Y}\}$.
\end{prop}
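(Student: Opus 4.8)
The plan is to reduce the general estimate to the classical volumetric bound for entropy numbers and to use the fundamental function as the natural normalizing scale, exploiting that both $X$ and $Y$ are $n$-dimensional symmetric quasi-Banach spaces and hence, up to constants depending only on $C_X$ and $C_Y$, sandwiched between scaled copies of $\ell_\infty^n$ and $\ell_1^n$. Concretely, I would first record the standard volume-ratio estimate for entropy numbers of identities between $n$-dimensional quasi-Banach spaces: there are constants $c_1,c_2>0$ (depending only on the quasi-norm constants) such that
\[
c_1\Big(\frac{\vol(B_Y)}{\vol(B_X)}\Big)^{1/n}2^{-k/n}
\le e_k(\id\colon X\to Y)
\le c_2\Big(\frac{\vol(B_X+B_Y)}{\vol(B_X)}\Big)^{1/n}2^{-k/n}
\quad (k\ge n),
\]
which is the quasi-Banach analogue of the classical fact (and which, I expect, will be proved in Section~\ref{sec:volume} in the form needed here). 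Since $B_X+B_Y \subset C(B_X \cup B_Y)$ up to a constant and both balls are symmetric, the upper volume factor is comparable to $\max\{1,(\vol(B_Y)/\vol(B_X))^{1/n}\}$ times a constant, so it suffices to show that the \emph{volume radius ratio} $(\vol(B_Y)/\vol(B_X))^{1/n}$ is comparable to $\varphi_Y(n)/\varphi_X(n)$.

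The key step is therefore: for an $n$-dimensional symmetric quasi-Banach space $X$ with normalized symmetric basis, $\vol(B_X)^{1/n} \asymp \varphi_X(n)^{-1}$, with constants depending only on $C_X$. For the upper bound on $\vol(B_X)$, I would use Lemma~\ref{lem:monotone-fundamental}: since $\|\id\colon \ell_\infty^n\to X\|\le K_X\varphi_X(n)$, we get $B_X \subset K_X\varphi_X(n)\,B_\infty^n$, hence $\vol(B_X)\le (2K_X\varphi_X(n))^n$, i.e.\ $\vol(B_X)^{1/n}\lesssim \varphi_X(n)$; wait — I need the reciprocal, so let me instead argue via the dual/lower inclusion. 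A cleaner route: by symmetry and Lemma~\ref{lem:lattice}, $X$ is a (quasi-normed) lattice, and the normalization $\|e_i\|_X=1$ forces $\|x\|_X \le K_X \sum_i |x_i|\,\|e_i\|_X = K_X\|x\|_1$ only crudely; better to use that for $x$ supported on a single coordinate $\|x\|_X=\|x_i e_i\|_X=|x_i|$, and that the fundamental function controls the "flat" vector. The two containments I actually want are
\[
\frac{1}{\varphi_X(n)}B_1^n \;\subset\; c\,B_X \quad\text{and}\quad B_X \;\subset\; C\,\frac{n}{\varphi_X(n)}\,B_\infty^n
\]
with $c,C$ depending only on $C_X$: the first from the triangle inequality and the normalization (any $x$ with $\|x\|_1\le 1/\varphi_X(n)$ is a convex-type combination of the $e_i/\varphi_X(n)$, using the $p$-norm version of Lemma~\ref{lem:lattice}), the second from $|x_i|\le \|x\|_X$ for each $i$ combined with a symmetric-rearrangement comparison $\|x\|_X \ge \varphi_X(n)\|x\|_\infty/n$-type bound (or, more robustly, just use $B_X\subset K_X\varphi_X(n)B_\infty^n$ is the wrong direction — instead use that the \emph{smallest} coordinate considerations give $\vol(B_X)^{1/n}\gtrsim \varphi_X(n)^{-1}$ from the $B_1^n$ inclusion and $\vol(B_X)^{1/n}\lesssim \varphi_X(n)^{-1}$ from comparing with $B_1^n$ from outside: $B_X \subset K_X'\,\varphi_X(n)^{-1}\,\cdot\,(\text{something with volume radius }\asymp 1)$). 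The honest statement I would isolate and prove as a short lemma is: $\vol(B_X)^{1/n}\asymp \varphi_X(n)^{-1}$, which follows by combining $\tfrac{1}{\varphi_X(n)}B_1^n\subset c\,B_X$ (lower bound on volume) with the reverse-type bound obtained by noting $\|x\|_X\ge \varphi_X(n)\cdot\big(\tfrac1n\sum_{i=1}^n x_i^*\big)$-flavored inequalities; since $\vol(B_1^n)^{1/n}\asymp 1/n$ and $\vol(B_\infty^n)^{1/n}\asymp 1$, both ends pin down $\vol(B_X)^{1/n}$ up to the constant $K_X$ and a universal constant, times possibly an $n^{O(1/n)}=\Theta(1)$ factor.

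Assembling: $(\vol(B_Y)/\vol(B_X))^{1/n} \asymp \varphi_Y(n)/\varphi_X(n)$, so the volumetric two-sided bound for $e_k$ with $k\ge n$ yields exactly \eqref{eq:ent-fundamental}, with constants depending only on $\max\{C_X,C_Y\}$ (through $K_X,K_Y$ and the Aoki--Rolewicz exponents, which are determined by $C_X,C_Y$). The main obstacle I anticipate is the lower-volume estimate $\vol(B_X)^{1/n}\gtrsim \varphi_X(n)^{-1}$ uniformly in $n$ with a constant depending only on $C_X$: the inclusion $\tfrac{1}{\varphi_X(n)}B_1^n\subset c\,B_X$ requires writing an arbitrary element of $\tfrac1{\varphi_X(n)}B_1^n$ as a combination of the basis vectors and controlling the quasi-norm, where the failure of the triangle inequality costs a factor that must be shown to be $O(1)$ in $n$ — this is where the $p$-norm from Aoki--Rolewicz and the unconditionality (Lemma~\ref{lem:lattice}) do the work, via $\|\sum_i x_i e_i\|_X^p \le \sum_i |x_i|^p\|e_i\|_X^p$ is again too lossy, so one instead groups coordinates dyadically as in the proof of Lemma~\ref{lem:lattice} to avoid an $n^{1/p}$-type blowup. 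Once that lemma is in place the rest is bookkeeping.
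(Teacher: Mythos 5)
Your overall target identities are the right ones, but the two steps that carry all the weight are not established, and both fail for reasons specific to the quasi-Banach setting. First, the cross-space volumetric upper bound: your reduction rests on ``$B_X+B_Y\subset C(B_X\cup B_Y)$ up to a constant'', so that $\vol(B_X+B_Y)^{1/n}$ is comparable to $\max\{\vol(B_X)^{1/n},\vol(B_Y)^{1/n}\}$. This is false in general (two thin boxes in orthogonal directions already give $\vol(K+L)\gg \max\{\vol K,\vol L\}$ with no dimension-free constant); controlling $\vol(B_X+\delta B_Y)$ by $C^n\vol(B_X)$ is a reverse Brunn--Minkowski statement, which requires a special position and convexity, and here the unit balls are not even convex. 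This is exactly why neither Sch\"utt's proof nor its quasi-Banach extension uses a cross-space covering-by-volume bound: the volume argument is only applied to coverings of a ball by dilates of \emph{itself} (cf.\ the inequality $e_k(\id\colon X\to X)\le 4^{1/p}2^{-(k-1)/n}$ used in the remark after Proposition~\ref{pro:vol-upper}), and the passage from $X$ to $Y$ goes through submultiplicativity and a factorization through $\ell_\infty^n$. Second, your route to $\vol(B_X)^{1/n}\lesssim\varphi_X(n)^{-1}$ relies on a ``$\|x\|_X\ge\varphi_X(n)\cdot\frac1n\sum_i x_i^*$-flavored'' inequality, i.e.\ $B_X\subset \frac{n}{\varphi_X(n)}B_1^n$. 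That inequality is proved by averaging over permutations and is a genuinely convex fact; it fails for quasi-norms: for $X=\ell_p^n$ with $p<1$ and $x=e_1$ it would read $1\ge n^{1/p-1}$. This is precisely the point where the nontrivial input $e_n(\id\colon X\to\ell_\infty^n)\lesssim\varphi_X(n)^{-1}$ from Edmunds--Netrusov (whose proof crucially uses the symmetric basis) cannot be avoided; note that in the paper the volume asymptotics $\vol(B_X)^{1/n}\asymp\varphi_X(n)^{-1}$ (your key lemma, which is Proposition~\ref{pro:fundamental-vol}) are \emph{deduced from} that entropy estimate together with Lemma~\ref{lem:vol-lower}, not used to prove Proposition~\ref{pro:fundamental}; the proposition itself is obtained by reducing, via Aoki--Rolewicz, to Kaewtem's theorem for $p$-Banach spaces.

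Two smaller points: your displayed two-sided bound has the volume ratio inverted (the lower bound from the union-bound argument is $2^{-(k-1)/n}(\vol(B_X)/\vol(B_Y))^{1/n}$, consistent with $\mathrm{rv}(X,Y)\asymp\varphi_Y(n)/\varphi_X(n)$), and for the easy volume lower bound the inclusion $\varphi_X(n)^{-1}B_1^n\subset cB_X$ only yields $\vol(B_X)^{1/n}\gtrsim (n\varphi_X(n))^{-1}$, since $\vol(B_1^n)^{1/n}\asymp 1/n$; you should instead use $\varphi_X(n)^{-1}B_\infty^n\subset K_X B_X$, which follows from Lemma~\ref{lem:monotone-fundamental} and is in substance the cube-packing argument of the paper.
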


\begin{proof}
In order to derive the statement from \cite[Theorem~4.2]{Kae17}, which is in terms of $p$-Banach spaces, we note that by the Aoki-Rolewicz theorem, we find $0<p,q\le 1$, a $p$-norm $|||\cdot|||_p$ and a $q$-norm $|||\cdot|||_q$ equivalent to $\|\cdot\|_X$ and $\|\cdot\|_Y$, respectively. Then both $|||\cdot|||_p$ and $|||\cdot|||_q$ are $r$-norms with $r=\min\{p,q\}$ depending only on $\max\{C_X,C_Y\}$ and we can apply \cite[Theorem~4.2]{Kae17}. Switching back to the original quasi-norms incurs additional implicit constants depending only on $r$.
\end{proof}

We note that the proof of \cite[Theorem~4.2]{Kae17} implicitly uses the statement of Lemma~\ref{lem:lattice}. Moreover, it essentially involves the following asymptotic inequality from \cite[Section 4, Lemma 3 (ii)]{EN98} which states that
\begin{equation} \label{eq:en-fundamental}
e_n(\id\colon X\to \ell_{\infty}^n)
\lesssim  \varphi_X(n)^{-1},
\end{equation}
where $X$ is as in the assumption of Proposition~\ref{pro:fundamental} and the implicit constant depends only on $C_X$. Note that in the proof of \eqref{eq:en-fundamental} the authors of \cite{EN98} crucially use symmetry. We can use \eqref{eq:en-fundamental} to prove the following generalization of Schütt's result \cite[Lemma 3]{Sch84} to quasi-Banach spaces, which was used in \cite{Sch84} to prove \eqref{eq:ent-fundamental} in the case of Banach spaces. 

\begin{prop}\label{pro:fundamental-vol}
Let $n\in\N$ and $X$ an $n$-dimensional quasi-Banach space with normalized symmetric basis $\{e_i\}_{i=1}^n$. Then
\[
\varphi_X(n)^{-1}\asymp \vol(B_X)^{1/n}, 
\]
where $\vol$ denotes Lebesgue measure on $\IR^n$, which is identified with ${\rm span}\{e_1,\dots,e_n\}$. The implicit constants depend only on the quasi-norm constant.
\end{prop}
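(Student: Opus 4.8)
The plan is to prove the two inequalities $\vol(B_X)^{1/n}\gtrsim\varphi_X(n)^{-1}$ and $\vol(B_X)^{1/n}\lesssim\varphi_X(n)^{-1}$ separately, identifying $X$ with $\IR^n$ via $\sum x_ie_i\mapsto(x_1,\dots,x_n)$ and $\ell_\infty^n$ with $(\mathrm{span}\{e_1,\dots,e_n\},\|\cdot\|_\infty)$ as in Lemma~\ref{lem:monotone-fundamental}, so that $\id\colon X\to\ell_\infty^n$ is literally the identity of coordinate spaces and Lebesgue volume comparisons make sense. The lower bound is elementary; the upper bound rests on the entropy estimate \eqref{eq:en-fundamental}.

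For the lower bound, I would note that Lemma~\ref{lem:monotone-fundamental} gives $\|\id\colon\ell_\infty^n\to X\|\le K_X\varphi_X(n)$, which is exactly the inclusion $\tfrac{1}{K_X\varphi_X(n)}B_\infty^n\subseteq B_X$. Taking $n$-th roots of volumes and using $\vol(B_\infty^n)=2^n$ yields $\vol(B_X)^{1/n}\ge \tfrac{2}{K_X\varphi_X(n)}$, the desired bound.

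For the upper bound, I would apply the standard volume-counting argument for entropy numbers: for every $\varepsilon>e_n(\id\colon X\to\ell_\infty^n)$ the set $B_X$ is covered by $2^{n-1}$ translates of $\varepsilon B_\infty^n$ in $\IR^n$, hence $\vol(B_X)\le 2^{n-1}\varepsilon^n\vol(B_\infty^n)=2^{2n-1}\varepsilon^n$. Letting $\varepsilon\downarrow e_n(\id\colon X\to\ell_\infty^n)$ and taking $n$-th roots gives $\vol(B_X)^{1/n}\le 2^{2-1/n}e_n(\id\colon X\to\ell_\infty^n)\lesssim\varphi_X(n)^{-1}$, where the last step is precisely \eqref{eq:en-fundamental}. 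Combining the two estimates completes the proof; all implicit constants depend only on $K_X$ (hence only on the quasi-norm constant of $X$) together with the one from \eqref{eq:en-fundamental}, which likewise depends only on $C_X$.

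The substantive content is hidden inside \eqref{eq:en-fundamental}: establishing $e_n(\id\colon X\to\ell_\infty^n)\lesssim\varphi_X(n)^{-1}$ directly is the genuinely hard part and, as remarked after \eqref{eq:en-fundamental}, it uses the \emph{symmetry} of the basis rather than mere unconditionality; since the excerpt already records this estimate, the proposition itself reduces to the two short volume comparisons above. The only points one should double-check are that the choice of normalization of Lebesgue measure on $\mathrm{span}\{e_1,\dots,e_n\}$ is immaterial (any two such measures are proportional, and the claimed equivalence is invariant under rescaling together with $B_\infty^n$), and that the covering defining $e_n(\id\colon X\to\ell_\infty^n)$ is a covering of $B_X$ by honest $\ell_\infty^n$-balls in $\IR^n$, to which the elementary volume inequality $\vol(A)\le N\varepsilon^n\vol(B_\infty^n)$ applies.
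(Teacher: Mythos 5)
Your proof is correct, and in the direction $\vol(B_X)^{1/n}\lesssim\varphi_X(n)^{-1}$ it is exactly the paper's argument: the paper packages your union-bound volume count as Lemma~\ref{lem:vol-lower} (applied with $Y=\ell_{\infty}^n$ and $k=n$) and then invokes \eqref{eq:en-fundamental}, precisely as you do, and indeed this is where the real work (symmetry via \eqref{eq:en-fundamental}) sits. For the reverse inequality you take a genuinely simpler route: you read Lemma~\ref{lem:monotone-fundamental} as the inclusion $B_{\infty}^n\subseteq K_X\varphi_X(n)B_X$ and compare volumes directly, obtaining $\vol(B_X)^{1/n}\ge 2/(K_X\varphi_X(n))$. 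The paper instead packs the $2^n$ disjoint cubes $y_{\varepsilon}+\varphi_X(n)^{-1}B_{\infty}^n$, centered at the normalized sign vectors $y_{\varepsilon}=\varphi_X(n)^{-1}\sum_j\varepsilon_j e_j$, inside $c_X B_X$ with $c_X=C_X(1+K_X)$, which needs one extra application of the quasi-triangle inequality on top of Lemma~\ref{lem:monotone-fundamental}. Since the union of those cubes is just $2\varphi_X(n)^{-1}B_{\infty}^n$, the two arguments have the same substance, but yours avoids the quasi-triangle step and gives the marginally cleaner constant $2/K_X$ rather than $4/c_X$; in either case all constants depend only on the quasi-norm constant (through $K_X$, resp.\ $C_X$ and $K_X$) and on the implicit constant in \eqref{eq:en-fundamental}, as the proposition requires. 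Your closing checks (irrelevance of the normalization of Lebesgue measure under the fixed basis identification, and that the covering in the definition of $e_n(\id\colon X\to\ell_{\infty}^n)$ consists of honest $\ell_{\infty}^n$-balls so the union bound applies) are also correct.
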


Proposition~\ref{pro:fundamental-vol} allows us to rewrite the bounds in Proposition~\ref{pro:fundamental} to 
\begin{equation} \label{eq:vol-large}
	e_k(\id\colon X\to Y) \asymp 2^{-k/n}{\rm rv}(X,Y),\quad k\ge n,
\end{equation}
where
\[
{\rm rv}(X,Y):=\frac{\vol(B_X)^{1/n}}{\vol(B_Y)^{1/n}}.
\]
Here, we identify $B_X$ with $\id(B_X)\subset Y$ and $Y$ with $\IR^n$ using a suitable basis.
Note that ${\rm rv}(X,Y)$ is the normalized ratio of volumes of the unit balls of $X$ and $Y$, respectively, and that it differs from the notion of \emph{volume ratio}, used in the local theory of Banach spaces.

For the proof of Proposition~\ref{pro:fundamental-vol}, we shall use volume comparison arguments and volume bounds such as the following lower bound on entropy numbers by the normalized ratio of volumes.

\begin{lem}\label{lem:vol-lower}
Let $n\in\N$ and $X$, $Y$ be $n$-dimensional quasi-Banach spaces. Then, for any $k\in\IN$,
\[
e_k(\id\colon X\to Y) \ge 2^{-\frac{k-1}{n}}{\rm rv}(X,Y).
\]
\end{lem}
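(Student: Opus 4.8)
The plan is to use a straightforward volume-counting argument. Suppose that $e_k(\id\colon X\to Y)<\varepsilon$ for some $\varepsilon>0$; then by definition there exist $y_1,\dots,y_{2^{k-1}}\in Y$ such that $B_X\subseteq\bigcup_{i=1}^{2^{k-1}}(y_i+\varepsilon B_Y)$, where we identify $B_X$ with $\id(B_X)\subseteq Y$ and view everything inside $\IR^n$ via a fixed basis. First I would apply the Lebesgue measure $\vol$ on $\IR^n$ to both sides of this inclusion. Monotonicity of measure together with subadditivity gives
\[
\vol(B_X)\le \sum_{i=1}^{2^{k-1}}\vol(y_i+\varepsilon B_Y)=2^{k-1}\varepsilon^n\vol(B_Y),
\]
using translation invariance of Lebesgue measure and the scaling identity $\vol(\varepsilon B_Y)=\varepsilon^n\vol(B_Y)$ in dimension $n$. (Note $\vol(B_Y)>0$ since $B_Y$ is the unit ball of a quasi-norm on a finite-dimensional space, hence has nonempty interior.)

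Next I would simply solve for $\varepsilon$: the estimate above yields
\[
\varepsilon\ge \left(\frac{\vol(B_X)}{2^{k-1}\vol(B_Y)}\right)^{1/n}=2^{-\frac{k-1}{n}}\,\frac{\vol(B_X)^{1/n}}{\vol(B_Y)^{1/n}}=2^{-\frac{k-1}{n}}\,{\rm rv}(X,Y).
\]
Since this holds for every $\varepsilon>e_k(\id\colon X\to Y)$ (whenever such a covering exists; if no finite covering of the required cardinality exists the entropy number is $+\infty$ and the bound is trivial), taking the infimum over admissible $\varepsilon$ gives $e_k(\id\colon X\to Y)\ge 2^{-(k-1)/n}\,{\rm rv}(X,Y)$, as claimed.

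There is essentially no obstacle here: the only points requiring a word of care are that $B_X,B_Y$ have finite positive volume (true in finite dimensions, and needed so the ratio is well-defined and the scaling is valid), and that the identification of $X$ and $Y$ with $\IR^n$ is made consistently so that "$\vol$" refers to the same Lebesgue measure on both sides — any change of linear coordinates rescales $\vol(B_X)$ and $\vol(B_Y)$ by the same Jacobian factor, so ${\rm rv}(X,Y)$ is independent of this choice. This is the same normalization already used in \eqref{eq:vol-large}, so no new convention is introduced.
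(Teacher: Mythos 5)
Your argument is correct and is essentially the same as the paper's proof: cover $B_X$ by $2^{k-1}$ translates of $\varepsilon B_Y$, compare volumes via subadditivity, translation invariance and the scaling $\vol(\varepsilon B_Y)=\varepsilon^n\vol(B_Y)$, and solve for $\varepsilon$. The additional remarks on positivity of the volumes and on the coordinate identification are fine but not needed beyond what the paper already implicitly assumes.
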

\begin{proof}
Suppose that $B_X$ is covered by $2^{k-1}$ balls of radius $r>0$ in the space $Y$ for some $k\in\IN$. Then a union bound immediately gives 
\[
\vol(B_X)\le 2^{k-1}r^n\vol(B_Y).
\]
Thus, $r\ge 2^{-(k-1)/n}{\rm rv}(X,Y)$, and the result follows.
\end{proof}

\begin{proof}[Proof of Proposition~\ref{pro:fundamental-vol}]

In the following, $\ell_{\infty}^n$ is taken with respect to the basis $\{e_i\}_{i=1}^n$. We conclude from Lemma~\ref{lem:vol-lower} and the inequality \eqref{eq:en-fundamental} that
\[
\frac{1}{2}\vol(B_X)^{1/n}
={\rm rv}(X,\ell_{\infty}^n)
\le 2 e_n(\id\colon X\to \ell_{\infty}^n)
\lesssim \varphi_X(n)^{-1},
\]
which completes the proof of the lower bound.

For the upper bound, we shall use a volume comparison argument. Consider the vectors 
\[
y_{\varepsilon}=\frac{\sum_{j=1}^{n}\varepsilon_je_j}{\varphi_X(n)} ,\quad \varepsilon=(\varepsilon_j)_{j=1}^n\in \{-1,1\}^n.
\]
Then $\|y_{\varepsilon}\|_X=1$ and $\|y_{\varepsilon}-y_{\varepsilon'}\|_{\infty}\ge \frac{2}{\varphi_X(n)}$ for each $\varepsilon\neq \varepsilon'$. Therefore, the balls $y_{\varepsilon}+\varphi_X(n)^{-1}B_{\infty}^n$, $\varepsilon\in\{-1,1\}^n$, are disjoint, and if $z\in y_{\varepsilon}+\varphi_X(n)^{-1}B_{\infty}^n$ for some $\varepsilon$, then, by Lemma~\ref{lem:monotone-fundamental}, 
\[
\|z\|_X
\le C_X(\|y_{\varepsilon}\|_X + \|z-y_{\varepsilon}\|_X)
\le C_X(1 + \|\id\colon \ell_{\infty}\to X \|\|z-y_{\varepsilon}\|_{\infty})
\le c_X,
\]
where $c_X=C_X(1+K_X)\le 2C_X K_X$ with $C_X$ being the quasi-norm constant of $\|\cdot\|_X$ and $K_X$ as in Lemma~\ref{lem:monotone-fundamental}.
So the disjoint balls $y_{\varepsilon}+\varphi_X(n)^{-1}B_{\infty}^n$, $\varepsilon\in\{-1,1\}^n$, are contained in $c_X B_{X}$. Hence, a comparison of volumes shows that
\[
 2^n\varphi_X(n)^{-n}\vol(B_{\infty}^n) \le  c_X^n \vol(B_X),
\]
which is equivalent to
\[
\varphi_X(n)^{-1} (4/c_X )\le \vol(B_X)^{1/n}.
\]
This concludes the proof.
\end{proof}

We show that under some additional assumption, \eqref{eq:vol-large} may in fact be extended to all $k$'s.

\begin{prop}\label{pro:vol-upper}
Let $n\in\IN$ and $X$, $Y$ be $n$-dimensional quasi-Banach spaces with symmetric bases. If 
\begin{equation} \label{eq:upper-1}
	\|\id\colon X\to Y\| \lesssim  {\rm rv}(X,Y),
\end{equation}
then
\[
e_k(\id\colon X\to Y) \asymp 2^{-k/n}{\rm rv}(X,Y),\quad k\in\IN.
\]
The implicit constants do not depend on $k$ or $n$.
\end{prop}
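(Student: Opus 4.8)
The plan is to combine the lower bound that holds for all $k$ with a matching upper bound. For the lower bound, one inequality is immediate: by Lemma~\ref{lem:vol-lower} we have $e_k(\id\colon X\to Y)\ge 2^{-(k-1)/n}{\rm rv}(X,Y)\gtrsim 2^{-k/n}{\rm rv}(X,Y)$ for every $k\in\IN$, with constant $2^{-1/n}\ge 1/2$ absorbed into $\asymp$. So only the upper bound $e_k(\id\colon X\to Y)\lesssim 2^{-k/n}{\rm rv}(X,Y)$ for $k\in\IN$ remains, and here assumption~\eqref{eq:upper-1} must be used.

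First I would dispose of the range $k\ge n$: this is exactly \eqref{eq:vol-large}, which follows from Proposition~\ref{pro:fundamental} together with Proposition~\ref{pro:fundamental-vol}, so $e_k(\id\colon X\to Y)\asymp 2^{-k/n}{\rm rv}(X,Y)$ there without needing \eqref{eq:upper-1}. For the range $1\le k\le n$, the idea is to interpolate between the endpoint $k=1$ and the endpoint $k=n$ via submultiplicativity of entropy numbers. Concretely, for $1\le k\le n$ write (roughly) $n-1 = (k-1) + (n-k)$ and apply property~(3) in the form $e_{(k-1)+(n-1)-(k-1)}(\id) = e_{n-1}(\id\circ\id)\le$ something; more cleanly, I would use that $e_{k}(\id)\le e_1(\id)^{1-\theta}e_{n}(\id)^{\theta}$ up to constants, where $\theta = (k-1)/(n-1)$ (or a similar convex-combination exponent), by iterating sub-multiplicativity and the norming property. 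Then $e_1(\id)\asymp\|\id\colon X\to Y\|\lesssim {\rm rv}(X,Y)$ by the norming property and \eqref{eq:upper-1}, while $e_n(\id)\asymp 2^{-1}{\rm rv}(X,Y)\asymp {\rm rv}(X,Y)$ by \eqref{eq:vol-large}. Hence both endpoints are $\asymp {\rm rv}(X,Y)$ (up to constants independent of $k,n$), and any geometric interpolation between them is again $\asymp {\rm rv}(X,Y)$; since for $1\le k\le n$ we have $2^{-k/n}\asymp 1$, this gives exactly $e_k(\id)\lesssim 2^{-k/n}{\rm rv}(X,Y)$ on $1\le k\le n$, completing the proof.

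The one subtlety is to make the "interpolation via sub-multiplicativity" step rigorous with constants that do not depend on $k$ or $n$. The cleanest route is: for $1\le k\le n$, by monotonicity $e_k(\id)\le e_1(\id)\lesssim {\rm rv}(X,Y)$, which is already the desired bound since $2^{-k/n}\ge 2^{-1}$ in this range; so in fact no interpolation is needed for $1\le k\le n$ — monotonicity plus the norm bound \eqref{eq:upper-1} suffices. The only place one must be slightly careful is that $\|\id\colon X\to Y\|$ is compared to $e_1$ through the $p$-Banach norming constant $2^{1-1/p}$, where $p$ is the Aoki–Rolewicz exponent of $Y$ depending only on its quasi-norm constant, so this contributes only an admissible constant. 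Thus the main (and essentially only) obstacle is bookkeeping of quasi-norm constants; the structural argument is short: lower bound from Lemma~\ref{lem:vol-lower}, upper bound from \eqref{eq:vol-large} for $k\ge n$ and from monotonicity together with $e_1\asymp\|\id\|\lesssim{\rm rv}(X,Y)$ for $k<n$.
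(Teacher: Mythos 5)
Your proposal is correct and, once you drop the unnecessary sub-multiplicativity/interpolation detour in favor of your own simpler observation, it is exactly the paper's argument: the lower bound comes from Lemma~\ref{lem:vol-lower}, the range $k\ge n$ from \eqref{eq:vol-large}, and for $k\le n$ one uses monotonicity $e_k(\id)\le \|\id\colon X\to Y\|$ together with $2^{-k/n}\ge 1/2$ and \eqref{eq:upper-1}. The norming-property constant $2^{1-1/p}$ you flag as a subtlety is not even needed, since only the inequality $e_1(\id)\le\|\id\colon X\to Y\|$ enters the upper bound.
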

\begin{proof}
If $k\ge n$, this follows from \eqref{eq:vol-large}. If $k\le n$, then by monotonicity
\[
e_k(\id\colon X\to Y)
\le \|\id\colon X\to Y\|
\le 2\cdot 2^{-k/n}\|\id\colon X\to Y\|.
\]
Together with \eqref{eq:upper-1}, this gives the upper bound. Finally, the lower bound follows from Lemma \ref{lem:vol-lower}.
\end{proof}

Note that we always have
\begin{equation}\label{eq:id-vol}
\|\id\colon X\to Y\|\ge {\rm rv}(X,Y).
\end{equation}
In particular, we can replace \eqref{eq:upper-1} by $\|\id\colon X\to Y\| \asymp  {\rm rv}(X,Y)$. For convenience of the reader, we provide a proof.

\begin{proof}[Proof of \eqref{eq:id-vol}]
Write
\[
\|\id\colon X\to Y\|
=\sup_{\|y\|_X\le 1}\|y\|_Y
=\inf\{r>0\colon B_X\subset rB_Y\}.
\]
If $B_X\subset rB_Y$ for some $r>0$, then we have $\vol(B_X)\le r^{n}\vol(B_Y)$, that is, ${\rm rv}(X,Y)\le r$. So if $\|\id\colon X\to Y\|\le r$, then ${\rm rv}(X,Y)\le r+\varepsilon$ for every $\varepsilon>0$, which proves the statement.
\end{proof}

\begin{remark}
For completeness we remark that the conclusion of Proposition~\ref{pro:vol-upper} under \eqref{eq:upper-1} can be shown directly for all $k\in\IN$. For this, note that by \cite[Lemma~2.1]{HKV16}, for an $n$-dimensional $p$-Banach space $X$ and for $k\in\IN$, we have
\begin{equation} \label{eq:entropy-identity}
e_k(\id\colon X\to X)\le 4^{1/p}2^{-(k-1)/n}.
\end{equation}
By factorization and \eqref{eq:entropy-identity}, we have for an $n$-dimensional $p$-Banach space $X$ and a quasi-Banach space $Y$ that
\begin{align*}
e_k(\id\colon X\to Y)
&\le e_k(\id\colon X\to X)\|\id\colon X\to Y\|\\
&\le 4^{1/p}2^{-\frac{k-1}{n}}\|\id\colon X\to Y\|.
\end{align*}
Using \eqref{eq:upper-1}, Lemma \ref{lem:vol-lower} and the Aoki-Rolewicz theorem completes the proof.
\end{remark}

\subsection{Interpolation}\label{sec:interpolation}

We already mentioned that the Lorentz sequence space $\ell_{p,u}$ arises from real interpolation of $\ell_p$-spaces. For convenience of the reader, we give more details on this procedure and refer to \cite{BL76} for more information. 

Let $(X_0,X_1)$ be a pair of quasi-normed spaces such that there is a quasi-normed space $\mathcal{X}$, in which both spaces are continuously embedded. Let $X_0+X_1$ be the space of all $x\in\mathcal{X}$ with $x=x_0+x_1$ for $x_i\in X_i$, $i\in\{0,1\}$. Define for $x\in X_0+X_1$ the $K$-functional by
\[
K(t,x):=\inf\big\{ \|x_0\|_{X_0}+t\|x_1\|_{X_1}\colon x=x_0+x_1\text{ with } x_i\in X_i, i\in\{0,1\}\big\},\quad t>0.
\]
Let $0<\theta<1$ and $0<u\le \infty$. Then $(X_0,X_1)_{\theta,u}$ is the space of all $x\in X_0+X_1$ such that  
\[
\|x\|_{(\theta,u)}
:=
\begin{cases}
\Big(\int_0^{\infty} (t^{-\theta}K(t,x))^u\frac{\dd t}{t}\Big)^{1/u}&\colon u<\infty,\\
\sup_{t>0} t^{-\theta}K(t,x)&\colon u=\infty,\\
\end{cases}
\]
is finite. The space is endowed with the quasi-norm $\|\cdot\|_{\theta,u}$. Note that if one of the spaces $X_0$ or $X_1$ is continuously embedded into the other, they automatically form a pair as above. This is the case with $\ell_p$-spaces and also $\ell_{p,u}$-spaces. The following result is taken from \cite[Theorem 5.3.1]{BL76}.

\begin{prop}\label{pro:reiter}
Let $0<p_0,p_1,u_0,u_1,p,u\le \infty$. If $p_0\neq p_1$ and $1/p=(1-\theta)/p_0+\theta/p_1$ for some $\theta \in (0,1)$, then
\[
(\ell_{p_0,u_0},\ell_{p_1,u_1})_{\theta,u}=\ell_{p,u}.
\]
Moreover, the quasi-norms $\|\cdot\|_{(\theta,u)}$ and $\|\cdot\|_{p,u}$ are equivalent. This statement remains true if $p_0=p_1=p$, provided that $1/u=(1-\theta)/u_0+\theta/u_1$.
\end{prop}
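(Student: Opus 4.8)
The plan is to derive the statement from Holmstedt's formula for the $K$-functional together with the reiteration theorem for the real method, both of which remain valid for quasi-Banach couples: by the Aoki-Rolewicz theorem one may first replace each quasi-norm by an equivalent $r$-norm, and all estimates below use only the quasi-triangle inequality up to constants. The backbone is the special instance
\[
(\ell_a,\ell_b)_{\eta,u}=\ell_{s,u},\qquad 0<a<b\le\infty,\quad \tfrac1s=\tfrac{1-\eta}{a}+\tfrac{\eta}{b},\quad \eta\in(0,1),
\]
with equivalent quasi-norms, which I would prove first.

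For the special instance one starts from Holmstedt's formula
\[
K(t,x;\ell_a,\ell_b)\asymp\Big(\sum_{k\le t^\alpha}(x_k^*)^a\Big)^{1/a}+t\Big(\sum_{k>t^\alpha}(x_k^*)^b\Big)^{1/b},\qquad \alpha=\Big(\tfrac1a-\tfrac1b\Big)^{-1}
\]
(the second term read as $t\sup_{k>t^\alpha}x_k^*$ when $b=\infty$): the upper bound comes from testing the rank-truncation $x=x\mathbf 1_{A_t}+x\mathbf 1_{A_t^c}$, where $A_t$ collects the $\lceil t^\alpha\rceil$ indices carrying the largest $|x_k|$, and the matching lower bound uses only the elementary rearrangement inequality $x_k^*\le x^*_{0,\lceil k/2\rceil}+x^*_{1,\lceil k/2\rceil}$, valid for any splitting $x=x_0+x_1$, together with Hölder's inequality. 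Substituting $m=t^\alpha$ in $\int_0^\infty\big(t^{-\eta}K(t,x;\ell_a,\ell_b)\big)^u\,\tfrac{\dd t}{t}$, discretising dyadically, and invoking the two elementary forms of the discrete Hardy inequality — admissible since $\eta\in(0,1)$ gives all exponents the right sign, and these inequalities hold for every $0<u\le\infty$ — collapses both sums to $\sum_{j}(2^{j/s}x^*_{2^j})^u$, which is $\asymp\|x\|_{s,u}^u$ by the standard discretisation $\|x\|_{s,u}\asymp\|(2^{j/s}x^*_{2^j})_{j\ge0}\|_{\ell_u}$ of the Lorentz quasi-norm.

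Granting the special instance, the Proposition follows by reiteration. If $p_0\neq p_1$, say $p_0<p_1$, I would fix $a<\min\{p_0,p_1\}$ and either some $b>p_1$ or $b=\infty$ when $p_1=\infty$; then $\ell_{p_i,u_i}=(\ell_a,\ell_b)_{\eta_i,u_i}$ with $\tfrac1{p_i}=\tfrac{1-\eta_i}{a}+\tfrac{\eta_i}{b}$ and $\eta_0\neq\eta_1$, so the reiteration theorem gives $(\ell_{p_0,u_0},\ell_{p_1,u_1})_{\theta,u}=(\ell_a,\ell_b)_{\eta,u}$ with $\eta=(1-\theta)\eta_0+\theta\eta_1$; since $\eta\mapsto\tfrac{1-\eta}{a}+\tfrac{\eta}{b}$ is affine this forces $\tfrac1{p_\eta}=\tfrac{1-\theta}{p_0}+\tfrac{\theta}{p_1}=\tfrac1p$, and the special instance identifies the right-hand side as $\ell_{p,u}$. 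If $p_0=p_1=p<\infty$, I would instead take finite $a<p<b$, so that $\ell_{p,u_i}=(\ell_a,\ell_b)_{\eta,u_i}$ for the \emph{same} $\eta$, and apply the coincident-parameter form of the reiteration theorem, which returns $(\ell_a,\ell_b)_{\eta,u}$ exactly under the condition $\tfrac1u=\tfrac{1-\theta}{u_0}+\tfrac{\theta}{u_1}$. The residual case $p_0=p_1=\infty$, where $\ell_{\infty,u_i}$ with $u_i<\infty$ is a proper subspace of $\ell_\infty$ whose elements satisfy $x_k^*\to0$, can be dispatched by a direct $K$-functional computation in the same spirit as the special instance.

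The one genuinely non-formal point — and the step I expect to be the main obstacle to write cleanly — is that the decreasing rearrangement $x\mapsto x^*$ is not linear, so one cannot merely transport the Lorentz couple to a couple of weighted $\ell_u$-spaces and cite interpolation of weighted sequence spaces. Holmstedt's formula is exactly what bypasses this: its near-optimal decompositions can be chosen of rank-truncation type, hence compatible with rearrangement, while the matching lower bound needs nothing beyond the subadditivity $x^*_k\le x^*_{0,\lceil k/2\rceil}+x^*_{1,\lceil k/2\rceil}$. A secondary, purely bookkeeping issue is that \cite{BL76} is set up for Banach couples; upgrading each step to the full range $0<p,u\le\infty$ costs only the Aoki-Rolewicz theorem and the validity of the Hardy inequalities for exponents below $1$, and leaves the structure of the argument untouched.
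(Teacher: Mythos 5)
The paper does not actually prove this proposition: it is quoted verbatim from Bergh--L\"ofstr\"om, Theorem 5.3.1. Your Holmstedt-plus-reiteration sketch is essentially the standard proof of that theorem, so for the couples the paper really uses (all $p_i<\infty$, or an $\ell_\infty=\ell_{\infty,\infty}$ endpoint, which the reiteration theorem handles because $\ell_\infty$ is of class $\mathcal{C}(1)$ for the couple $(\ell_a,\ell_\infty)$, not because it is of the form $(\ell_a,\ell_b)_{\eta_1,u_1}$) your argument is sound, modulo the quasi-Banach bookkeeping you correctly flag (Aoki--Rolewicz, Hardy inequalities for exponents below $1$, and the validity of reiteration for quasi-Banach couples).

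There is, however, a concrete gap if one takes the statement in its full stated generality $0<p_i,u_i\le\infty$. Your reduction ``$\ell_{p_i,u_i}=(\ell_a,\ell_b)_{\eta_i,u_i}$'' is impossible when $p_i=\infty$ and $u_i<\infty$: $1/p_i=0$ forces $\eta_i=1$, which is outside the admissible range, so every couple with a genuine $\ell_{\infty,u_i}$-endpoint ($u_i<\infty$) escapes the reiteration step; your closing remark only concedes the sub-case $p_0=p_1=\infty$ and silently treats the mixed case $p_0<p_1=\infty$, $u_1<\infty$ as covered, which it is not. In fact, in that mixed case the claimed equivalence fails: testing $x=\mathbf{1}_{\{1,\dots,N\}}$ one finds $K(t,x;\ell_{p_0,u_0},\ell_{\infty,u_1})\asymp\min\{N^{1/p_0},\,t(\log N)^{1/u_1}\}$, hence $\|x\|_{(\theta,u)}\asymp N^{1/p}(\log N)^{\theta/u_1}$ whereas $\|x\|_{p,u}\asymp N^{1/p}$, so the implicit constants necessarily blow up and no argument can close this case --- the proposition has to be read, as in Bergh--L\"ofstr\"om, with $u_i=\infty$ whenever $p_i=\infty$. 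Finally, for $p_0=p_1=\infty$ the promised ``direct $K$-functional computation in the same spirit'' is not routine: truncating at rank $m$ (or at amplitude $x_m^*$) leaves an extra contribution $\asymp(\log m)^{1/u_1}x_m^*$ in the $\ell_{\infty,u_1}$-quasi-norm of the tail, so the Holmstedt-type formula you rely on does not transplant and would have to be re-derived for this couple.
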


Entropy numbers behave well with respect to interpolation on either side, but not on both (as we mentioned before). The following result is adapted from \cite[Theorem 1.3.2]{ET96}.

\begin{prop}\label{pro:entropy-interpolation}
Let $Y$ be a quasi-Banach space and $(X_0,X_1)$ be a pair as above, $\theta\in (0,1)$ and $0<u\le \infty$.  
\begin{enumerate}
\item If $T\colon Y\to X_0\, \cap\, X_1$ is linear and continuous with respect to $\|x\|=\max\{\|x\|_{X_0},\|x\|_{X_1}\}$, $x\in X_0\cap X_1$, then, for all $k_0,k_1\in\IN$, we have
\[
e_{k_0+k_1-1}(T\colon Y\to (X_0,X_1)_{\theta,u})
\le C e_{k_0}^{1-\theta}(T\colon Y \to X_0) e_{k_1}^{\theta}(T\colon Y \to X_1).
\]
\item If $T\colon X_0+X_1\to Y$ is linear such that its restrictions to $X_0$ and $X_1$ are continuous, then, for all $k_0,k_1\in\IN$, we have
\[
e_{k_0+k_1-1}(T\colon (X_0,X_1)_{\theta,u}\to Y)
\le C e_{k_0}^{1-\theta}(T\colon X_0\to Y) e_{k_1}^{\theta}(T\colon  X_1\to Y).
\]
\end{enumerate}
Here, the constant $C\in(0,\infty)$ depends only on the quasi-norm constants of $X_0$ and $X_1$.
\end{prop}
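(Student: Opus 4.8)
The plan is to reduce both parts to the standard statement in \cite[Theorem~1.3.2]{ET96}, which is phrased for Banach couples, by passing through the Aoki--Rolewicz theorem. First I would recall the underlying geometric mechanism, since the argument is essentially the same in the quasi-Banach case: given coverings of $T(B_Y)$ (resp.\ of $T$ applied to the unit ball of $(X_0,X_1)_{\theta,u}$) witnessing $e_{k_0}(T\colon Y\to X_0)$ and $e_{k_1}(T\colon Y\to X_1)$ up to a factor, one forms the $2^{k_0-1}\cdot 2^{k_1-1}\le 2^{k_0+k_1-2}$ pairwise sums of centers and estimates the radius of the resulting cover in the interpolation quasi-norm by splitting an arbitrary $Tx$ into the two nearest centers and bounding the $K$-functional at the scale $t=\varepsilon_1/\varepsilon_0$; the $\theta,u$-quasi-norm of the remainder then comes out as $\lesssim \varepsilon_0^{1-\theta}\varepsilon_1^{\theta}$, with the implied constant depending only on the quasi-norm constants of $X_0,X_1$ (and, in the first part, of $Y$, but one can absorb that). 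For Part~(2) the roles are dual: a point in the unit ball of $(X_0,X_1)_{\theta,u}$ is first decomposed via the $K$-functional into an $X_0$-part and an $X_1$-part of controlled norms, then each part is covered using the respective entropy numbers, and the triangle inequality in $Y$ combines the two covers.

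Second, to make the quoted Banach-space result applicable I would invoke the Aoki--Rolewicz theorem (as already used repeatedly in Section~\ref{sec:volume}) to replace $\|\cdot\|_{X_0}$ and $\|\cdot\|_{X_1}$ by equivalent $p$-norms with a common exponent $p=\min\{p_0,p_1\}\le 1$ determined by $\max\{C_{X_0},C_{X_1}\}$, and $\|\cdot\|_Y$ by an equivalent $q$-norm. Passing to these equivalent quasi-norms changes every entropy number in the asserted inequality by at most a multiplicative constant depending only on the quasi-norm constants, so it suffices to prove the inequality for $p$-normed spaces; and the real interpolation functor is unchanged (up to norm equivalence) under replacing the endpoint quasi-norms by equivalent ones, because the $K$-functional changes only by constants. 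For $p$-normed couples the covering argument above goes through verbatim: the only place the triangle inequality enters is in bounding $\|Tx - (\text{center}_0+\text{center}_1)\|$ and in estimating $\|\cdot\|_{(\theta,u)}$ of a sum, and there the $p$-triangle inequality $\|a+b\|^p\le\|a\|^p+\|b\|^p$ yields constants of the form $2^{1/p}$, exactly as in the normed case with $p=1$. Thus one either cites \cite[Theorem~1.3.2]{ET96} after this reduction (noting its proof never uses more than the $p$-triangle inequality) or reproduces the two-paragraph covering argument directly.

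The main obstacle is bookkeeping rather than conceptual: one must check that \cite[Theorem~1.3.2]{ET96}, stated for Banach spaces, has a proof that uses only the lattice-type estimates valid in $p$-Banach spaces, so that the reduction via Aoki--Rolewicz is legitimate and the resulting constant $C$ genuinely depends only on the quasi-norm constants of $X_0$ and $X_1$ (and not, say, on $\theta$ blowing up as $\theta\to 0,1$ — here one uses that the relevant integral $\int_0^1 t^{(1-\theta)u}\,\dd t/t + \int_1^\infty t^{-\theta u}\,\dd t/t$ is comparable to $\bigl(\frac1{(1-\theta)u}+\frac1{\theta u}\bigr)$, which for our purposes we may allow to enter the constant, or we simply do not claim uniformity in $\theta$). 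A secondary point to verify is that in Part~(1) the hypothesis that $T$ maps into $X_0\cap X_1$ continuously for the max-quasi-norm is exactly what makes both $e_{k_0}(T\colon Y\to X_0)$ and $e_{k_1}(T\colon Y\to X_1)$ finite and the simultaneous covering meaningful; once this is in place, no further care beyond the standard argument is needed.
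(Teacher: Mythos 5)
The paper gives no self-contained argument here: it simply adapts \cite[Theorem 1.3.2]{ET96}, and that theorem is already formulated for quasi-Banach spaces (its hypotheses are the abstract interpolation properties $\|b\|_{B_\theta}\le \|b\|_{B_0}^{1-\theta}\|b\|_{B_1}^{\theta}$ for $b\in B_0\cap B_1$, respectively $K(t,b)\lesssim t^{\theta}\|b\|_{B_\theta}$, both of which $(X_0,X_1)_{\theta,u}$ satisfies up to constants). So your primary route -- reduce to that theorem and cite it -- is essentially the paper's proof, except that the Aoki--Rolewicz reduction you build in is unnecessary: the quoted result is not ``phrased for Banach couples'', and its proof uses nothing beyond the quasi-triangle inequality in $X_0$ and $X_1$. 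Your reduction is harmless (equivalent quasi-norms change entropy numbers and the $K$-functional only by multiplicative constants), just superfluous.

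The one substantive flaw is in your sketch of the covering mechanism for part (1). Forming the $2^{k_0-1}\cdot 2^{k_1-1}$ pairwise \emph{sums} of centers is the device for part (2), where $x$ is split as $x_0+x_1$ via the $K$-functional and $Tx_0$, $Tx_1$ are approximated separately; it does not work for part (1): if $\|Tx-a_i\|_{X_0}\le\varepsilon_0$ and $\|Tx-b_j\|_{X_1}\le\varepsilon_1$, then $Tx-(a_i+b_j)$ is small in neither endpoint norm, and no choice of scale $t$ in the $K$-functional repairs this. The correct construction for part (1) is the common refinement of the two covers: in each nonempty set $T(B_Y)\cap(a_i+\varepsilon_0 B_{X_0})\cap(b_j+\varepsilon_1 B_{X_1})$ pick one point $c_{ij}$ (at most $2^{k_0+k_1-2}$ of them); then $Tx-c_{ij}$ is simultaneously $\lesssim\varepsilon_0$ small in $X_0$ and $\lesssim\varepsilon_1$ small in $X_1$, and $\|Tx-c_{ij}\|_{(\theta,u)}\lesssim\varepsilon_0^{1-\theta}\varepsilon_1^{\theta}$ follows from $K(t,z)\le\min\{\|z\|_{X_0},t\|z\|_{X_1}\}$. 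Since you offer the direct argument as an alternative to the citation, this step needs that correction; with the citation route (or with the fix) the proposal is sound. Two minor points: in part (1) the quasi-norm constant of $Y$ never enters, and the constant $C$ also carries the usual $(\theta,u)$-dependence of the real-method norm, as you yourself note.
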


We shall apply the statements of this section to prove case (0) in Theorem~\ref{thm:main}. Note that with regard to Proposition~\ref{pro:reiter}, the restriction to the first $n$ coordinates does not change results and that it is sufficient to consider equivalent quasi-norms. 

\section{The size of the unit ball of a Lorentz space}

As a preparation for the proof of Theorem~\ref{thm:main}, we prove asymptotics for the volume of the unit ball of a Lorentz space (Lemma~\ref{lem:volume}) and for its size when measured in the quasi-norm of another Lorentz space (Proposition~\ref{pro:norm-emb}); such results are of independent interest.

The fundamental function of $\ell_{p,u}$ with $0<p,u\le \infty$ satisfies
\begin{equation} \label{eq:fundamental-lorentz}
	\varphi_{\ell_{p,u}}(n)
\asymp
\begin{cases}
	n^{1/p}&\colon p<\infty,\\
	(\log n)^{1/u} &\colon p=\infty.
\end{cases}
\end{equation}

Combined with Proposition~\ref{pro:fundamental-vol} this yields the following asymptotics for the volume of Lorentz balls. The case $p<\infty$ can be found in \cite[Theorem 7]{DV20} and is proven using interpolation methods. In the case of $p=1$, the volume of $B_{p,u}^n$ can be computed explicitly and precise asymptotics become available, see \cite[Theorem 5]{DV20} and \cite[Corollary 1]{KPS23}. 

\begin{lem}\label{lem:volume}
For all $0<p,u\le \infty$, we have 
\[
\vol(B_{p,u}^n)^{1/n}\asymp 
\begin{cases}
n^{-1/p}  &\colon p<\infty,\\
(\log n)^{-1/u}  &\colon p=\infty.
\end{cases}
\]
\end{lem}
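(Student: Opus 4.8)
The plan is to deduce Lemma~\ref{lem:volume} directly from Proposition~\ref{pro:fundamental-vol} together with the asymptotics \eqref{eq:fundamental-lorentz} for the fundamental function of $\ell_{p,u}$. First I would record that, for every $n$, the canonical unit vector basis $\{e_i\}_{i=1}^n$ of $\ell_{p,u}^n$ is \emph{normalized}: the non-increasing rearrangement of $e_1$ is $(1,0,0,\dots)$, so $\|e_1\|_{p,u}=\|i^{1/p-1/u}(e_1)_i^*\|_u=1$. It is also $1$-symmetric, since $\|\cdot\|_{p,u}$ depends on a sequence only through its non-increasing rearrangement and is therefore invariant under permutations and sign changes. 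Being finite-dimensional, $\ell_{p,u}^n$ is automatically complete, hence a quasi-Banach space with a normalized symmetric basis, so Proposition~\ref{pro:fundamental-vol} applies with $X=\ell_{p,u}^n$ and gives $\vol(B_{p,u}^n)^{1/n}\asymp \varphi_{\ell_{p,u}}(n)^{-1}$.

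The one point that requires care — and the only genuine obstacle — is that the implicit constants in Proposition~\ref{pro:fundamental-vol} depend only on the quasi-norm constant of $\ell_{p,u}^n$, so to conclude that the equivalence above holds with constants \emph{independent of $n$} one must know that this quasi-norm constant is bounded uniformly in $n$. This holds because $\ell_{p,u}^n$ is the restriction of $\ell_{p,u}$ on $\IR^{\IN}$ to its first $n$ coordinates, the quasi-triangle constant of $\|\cdot\|_{p,u}$ on $\IR^{\IN}$ depends only on $p$ and $u$ (see \cite[(1.4.9)]{Grafakos} or \cite[Lemma~2.9]{LN2023}), and passing to a subspace does not increase it. Hence
\[
\vol(B_{p,u}^n)^{1/n}\asymp \varphi_{\ell_{p,u}}(n)^{-1},
\]
with constants depending only on $p$ and $u$.

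It then remains to substitute \eqref{eq:fundamental-lorentz}, which yields $\vol(B_{p,u}^n)^{1/n}\asymp n^{-1/p}$ when $p<\infty$ and $\asymp(\log n)^{-1/u}$ when $p=\infty$. If a self-contained argument is preferred, \eqref{eq:fundamental-lorentz} itself follows from the one-line computation $\varphi_{\ell_{p,u}}(n)=\|\sum_{i=1}^n e_i\|_{p,u}=\bigl(\sum_{i=1}^n i^{u/p-1}\bigr)^{1/u}$ for $u<\infty$ and $\sup_{1\le i\le n} i^{1/p}$ for $u=\infty$, combined with $\sum_{i=1}^n i^{a-1}\asymp n^a$ for $a>0$, $\sum_{i=1}^n i^{-1}\asymp\log n$, and the convention $(\log n)^0=1$. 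Everything beyond the uniform-in-$n$ control of the quasi-norm constant discussed above is a direct substitution.
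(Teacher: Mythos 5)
Your proposal is correct and follows essentially the same route as the paper, which obtains the lemma precisely by combining Proposition~\ref{pro:fundamental-vol} with the fundamental function asymptotics \eqref{eq:fundamental-lorentz} (the paper's separate covering argument for $p=\infty$, $u<\infty$ is only an optional self-contained illustration). Your explicit check that the quasi-norm constant of $\ell_{p,u}^n$ is bounded uniformly in $n$, so the implicit constants depend only on $p$ and $u$, is a point the paper leaves tacit and is handled correctly.
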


For convenience of the reader we give a direct proof in the case of $p=\infty$.

\begin{proof}[Proof of Lemma~\ref{lem:volume} for $p=\infty$ and $u<\infty$]
Let us denote
\begin{equation}\label{def:Hn}
H_n:=\sum_{k=1}^n k^{-1}.
\end{equation}
Then $H_n$ grows logarithmically in $n$ and  $H_n^{-1/u}\cdot [-1,1]^n\subset B^n_{\infty,u}$, which gives the lower bound.

To show the upper bound, we fix some $c>1$ and denote by $K\le n$ the maximal number of indices of $x\in B^n_{\infty,u}$, where $|x_j|> cH_n^{-1/u}$. Then
\[
1\ge \sum_{k=1}^n k^{-1} (x_k^*)^u\ge (x^*_K)^u\sum_{k=1}^K k^{-1}\ge c^u\cdot  H_n^{-1}\cdot H_K.
\]
Letting $c:=6^{1/u}$ and using the elementary estimate $\log n\le H_n \le 3\log n$, we obtain $K\le \sqrt{n}$.

We can now cover $B^n_{\infty,u}$ by the union of $\binom{n}{K}$ cubes having sides $[-1,1]$ in exactly $K$ coordinates and $[-cH_n^{-1/u},cH_n^{-1/u}]$ in the remaining ones.
By volume comparison, we obtain
\[
\vol(B_{\infty,u}^n)\le \binom{n}{K}\vol([-1,1]^K\times [-cH_n^{-1/u},cH_n^{-1/u}]^{n-K})
\]
and
\begin{align*}
\vol(B_{\infty,u}^n)^{1/n}&\le \binom{n}{K}^{1/n}\cdot 2^{K/n}\cdot (2cH_n^{-1/u})^{1-K/n}\\
&\le 2\cdot 2\cdot (2c H_n^{-1/u})\cdot (2c)^{-K/n}\cdot H_n^{K/(un)}\\
&\le 8c H_n^{-1/u}\cdot H_n^{K/(un)}.
\end{align*}
Finally, we observe that $H_n^{K/(un)}$ is bounded due to $K\le \sqrt{n}$.
\end{proof}

We shall need the following decay estimates for the largest entries. These are essentially sharp as shown by $x=\mathbf{1}$ for $p<\infty$ and $x=( (\log i)^{-1/u})_{i=1}^n$ for $p=\infty$.
\begin{lem}\label{lem:x-star-bound}
Let $n\in\IN$. For all $x\in\IR^n$ and $i\in \{1,\dots,n\}$, we have  
\[
x_i^*\lesssim \|x\|_{p,u}
\begin{cases}
	i^{-1/p}&\colon  p<\infty,\\
	(\log i)^{-1/u}&\colon  p=\infty.\\
\end{cases}
\]
\end{lem}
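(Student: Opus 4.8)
\textbf{Proof plan for Lemma~\ref{lem:x-star-bound}.} The plan is to exploit the monotonicity of the non-increasing rearrangement together with an elementary lower bound for a weighted partial sum. Fix $x\in\IR^n$, let $x^*=(x_1^*,\dots,x_n^*)$ be its non-increasing rearrangement, and fix $i\in\{1,\dots,n\}$. I will treat the cases $u<\infty$ and $u=\infty$ separately.

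First suppose $u<\infty$. Unwinding the definition, $\|x\|_{p,u}^u=\sum_{j=1}^n j^{u/p-1}(x_j^*)^u$, and since $x^*$ is non-increasing we have $x_j^*\ge x_i^*$ for every $j\le i$, whence $\|x\|_{p,u}^u\ge (x_i^*)^u\sum_{j=1}^i j^{u/p-1}$. If $p<\infty$, the claim $x_i^*\lesssim\|x\|_{p,u}i^{-1/p}$ therefore reduces to the elementary estimate $\sum_{j=1}^i j^{\alpha-1}\gtrsim i^{\alpha}$ with $\alpha:=u/p>0$; this follows by comparison with $\int_1^{i+1}t^{\alpha-1}\dd t$ when $0<\alpha<1$ and from $\sum_{i/2\le j\le i}j^{\alpha-1}\ge (i/2)^{\alpha}$ when $\alpha\ge 1$ (the case $i=1$ being trivial), with implicit constant depending only on $\alpha$, hence on $p,u$. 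If instead $p=\infty$, the weight exponent is $-1$, so $\|x\|_{\infty,u}^u\ge (x_i^*)^u\sum_{j=1}^i j^{-1}=(x_i^*)^u H_i\gtrsim (x_i^*)^u\log i$, using the estimate $H_i\asymp\log i$ already employed in the proof of Lemma~\ref{lem:volume}; this is the desired bound (with the convention $\log i$ replaced by $\log(i+1)$ for $i=1$, as flagged in the Notation paragraph).

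Next suppose $u=\infty$. Then $\|x\|_{p,\infty}=\sup_{j}j^{1/p-1/\infty}x_j^*=\sup_j j^{1/p}x_j^*$ when $p<\infty$, so specializing to $j=i$ gives $i^{1/p}x_i^*\le\|x\|_{p,\infty}$, i.e.\ $x_i^*\le\|x\|_{p,\infty}i^{-1/p}$; and when $p=\infty$ as well, $\|x\|_{\infty,\infty}=\sup_j x_j^*=x_1^*\ge x_i^*$, which matches the asserted bound under the convention $(\log i)^{-1/\infty}=1$.

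I expect no genuine obstacle here: the argument is a one-line monotonicity reduction, and the only mild points requiring care are the elementary partial-sum asymptotics $\sum_{j=1}^i j^{u/p-1}\asymp i^{u/p}$ (resp.\ $H_i\asymp\log i$) and the $\log 1=0$ edge case for small $i$, both of which are already acknowledged in the paper. The sharpness examples $x=\mathbf 1$ (for $p<\infty$) and $x=((\log i)^{-1/u})_{i=1}^n$ (for $p=\infty$) can be checked directly by plugging into the definitions, confirming that the exponents cannot be improved.
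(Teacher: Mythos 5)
Your proof is correct, and for the main case $u<\infty$ it is a genuinely more direct route than the paper's. The paper writes $(x_i^*)^u=\sum_{\ell=i}^n z_\ell$ with $z_\ell\ge 0$ (an Abel-type layer decomposition), bounds $i^{\beta}\sum_{\ell\ge i}z_\ell\le\sum_{\ell}z_\ell\,\ell^{\beta}$, replaces $\ell^{\beta}$ by $\sum_{j\le\ell}j^{\beta-1}$ (resp.\ $\log\ell$ by $\sum_{j\le \ell}j^{-1}$ when $p=\infty$) and interchanges the two sums to recover $\|x\|_{p,u}^u$. You instead simply drop the tail and use that $x^*$ is non-increasing, so that $\|x\|_{p,u}^u\ge (x_i^*)^u\sum_{j=1}^i j^{u/p-1}\gtrsim (x_i^*)^u\, i^{u/p}$ (resp.\ $\gtrsim (x_i^*)^u\log i$ via $H_i$ when $p=\infty$); the only ingredient beyond monotonicity is the elementary partial-sum asymptotic $\sum_{j=1}^i j^{\alpha-1}\asymp i^{\alpha}$, which you justify correctly in both regimes $0<\alpha<1$ and $\alpha\ge 1$. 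The cases $u=\infty$ coincide with the paper's (evaluate the supremum at $j=i$, and the trivial case $p=u=\infty$), and your handling of the $i=1$ logarithm via the paper's $\log(i+1)$ convention is consistent with its Notation paragraph. In short, both arguments rest on comparing the Lorentz quasi-norm with the weight sum $\sum_{j\le i}j^{u/p-1}$, but yours avoids the layer decomposition and the double-sum interchange, yielding a shorter proof with the same dependence of the implicit constant on $p$ and $u$; the paper's decomposition buys nothing extra here, though it mirrors the technique it reuses elsewhere.
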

\begin{proof}
If $p=u=\infty$, then this trivially holds. If $p<u=\infty$, then
\[
x_i^*
=i^{-1/p}i^{1/p}x_i^*
\le i^{-1/p}\max_{1\le j\le n}j^{1/p}x_j^*
=i^{-1/p}\|x\|_{p,\infty}.
\]
If $u<\infty$, then we proceed as follows. There are $z_{1},\dots,z_{n}\ge 0$ such that $(x_i^*)^u=\sum_{\ell=i}^{n}z_{\ell}$ for every $i\in\{1,\dots, n\}$. If $p<\infty$, then for $\beta=u/p$,
\begin{align*}
	i^{\beta}\sum_{\ell=i}^{n}z_{\ell}
\le \sum_{\ell=1}^{n}z_{\ell}\ell^{\beta}
\lesssim \sum_{\ell=1}^{n}z_{\ell}\sum_{j=1}^{\ell}j^{\beta-1}
= \sum_{j=1}^{n}j^{\beta-1}\sum_{\ell=j}^{n}z_{\ell}
=\|x\|_{p,u}^u.
\end{align*}
If $p=\infty$, i.e., $\beta=0$, then this remains valid if we replace $i^{\beta}$ and $\ell^\beta$ by $\log i$ and $\log \ell$, respectively.
\end{proof}

Next, we prove Proposition~\ref{pro:norm-emb}.
We shall use that for $0<p\le \infty$ and $0<u\le v\le \infty$ there exists a constant $c_{p,u,v}\in(0,\infty)$ such that 
\begin{equation} \label{eq:ordering}
\|x\|_{p,v}\le c_{p,u,v}\|x\|_{p,u}\quad x\in\IR^n.
\end{equation}
This is a well known fact, see \cite[Proposition 4.2]{BS88} and also \cite[Proposition 6]{DV20} and its proof.

\begin{proof}[Proof of Proposition~\ref{pro:norm-emb}]
In what follows, we let $x\in \IR^n$.
\vskip 2mm

\noindent\emph{Case 1.} Let $p\not=q<\infty$. The lower bound follows by choosing $x=e_1\in B^n_{p,u}$ if $p<q$ and 
$x/\|x\|_{p,u}\in B^n_{p,u}$ with $x=\sum_{i=1}^n e_i$ if $q<p.$

To show the upper bound, we observe that by \eqref{eq:ordering} it is enough to consider only $u=\infty.$ In this case, we estimate
\begin{align*}
\|x\|_{q,v}^v&=\sum_{i=1}^n i^{v/q-1}\cdot i^{-v/p} \cdot i^{v/p}\cdot (x_i^*)^v\le \max_{1\le j\le n} j^{v/p} (x_j^*)^v\cdot \sum_{i=1}^n i^{v/q-v/p-1}\\
&\lesssim \|x\|_{p,\infty}^v\cdot n^{v(1/q-1/p)_+}
\end{align*}
if $v<\infty$ and
\begin{align*}
\|x\|_{q,\infty}=\max_{1\le j\le n} j^{1/q}x_j^* = \max_{1\le j\le n} j^{1/q-1/p}\cdot j^{1/p}x_j^*\le \|x\|_{p,\infty}\cdot n^{v(1/q-1/p)_+}
\end{align*}
if $v=\infty.$

\vskip 2mm
\noindent\emph{Case 2.} Let $q<p=\infty$ and $0<u,v\le \infty$. The lower bound is obtained by choosing $x/\|x\|_{\infty,u}\in B_{\infty,u}^n$ with $x:=\sum_{i=1}^{n}e_i$. For the upper bound, we first assume that $v<\infty$.
Then, by Lemma \ref{lem:x-star-bound},
\[
\|x\|_{q,v}^v
=\sum_{i=1}^{n}i^{v/q-1}(x_i^*)^v
\le \|x\|_{\infty,u}\sum_{i=1}^{n}i^{v/q-1}(\log i)^{-v/u}.
\]
To complete the proof of the upper bound in this case, we use the known asymptotics
\[
\sum_{i=1}^{n}i^{\lambda-1}(\log i)^{\beta}
\asymp n^{\lambda}(\log n)^{\beta},
\]
valid for $\lambda>0$ and $\beta\in \IR$ with implicit constants independent of $n$. Now assume that $v=\infty$. Then
\[
\|x\|_{q,\infty}
=\max_{1\le i\le n}i^{1/q}x_i^*
\le \|x\|_{\infty,u}\max_{1\le i\le n}i^{1/q}(\log i)^{-1/u}
\lesssim n^{1/q}(\log n)^{-1/u}\|x\|_{\infty,u}.
\]
\vskip 2mm
\noindent\emph{Case 3.} Let $p<q=\infty$ and $0<u,v\le \infty$. The lower bound is obtained by choosing the vector $x=e_1\in B_{p,u}^n$. First, let $v<\infty$. Then
\[
\|x\|_{\infty,v}^v
=\sum_{i=1}^{n}i^{-1}(x_i^*)^v
\le \|x\|_{p,u}\sum_{i=1}^{n}i^{-v/p-1}
\lesssim \|x\|_{p,u}.
\]
Now let $v=\infty$. Then, by the estimate in \eqref{eq:ordering}, we have $\|x\|_{\infty}\lesssim \|x\|_{\infty,v}$. This completes the proof of the upper bound.

\noindent\emph{Case 4.} Let $p=q\leq \infty$. This case splits into two cases. 
\vskip 1mm
$u\le v$: Then we conclude the upper bound from \eqref{eq:ordering}, while the lower bound simply follows by choosing $x=e_1\in B_{p,u}^n$.
\vskip 1mm
$u>v$:  Then we deduce from Hölder's inequality applied with conjugate indices $r:=u/v>1$ and $r^*:=u/(u-v)$ that
\begin{align*}
\|x\|_{p,v}
& = \Big(\sum_{k=1}^{n}(k^{1/p}x_k^*)^{v}k^{-v/u}\cdot k^{-1+v/u}\Big)^{1/v} \cr
& \le \Big(\sum_{k=1}^{n}(k^{1/p}x_k^*)^{u}k^{-1}\Big)^{1/u}\Big(\sum_{k=1}^{n}k^{-1}\Big)^{(u-v)/uv},
\end{align*}
where the first factor on the right-hand side is just $\|x\|_{p,u}$, while the second factor equals $H_n^{1/v-1/u}$ with $H_n$ having been  introduced in \eqref{def:Hn}.  Therefore, Hölder's inequality immediately gives
  \[
    \|\id\colon \ell_{p,u}^n\to \ell_{p,v}^n\| \leq \sup_{\|x\|_{p,u}\leq 1} \|x\|_{p,u} H_n^{1/v-1/u} = H_n^{1/v-1/u} .
  \]
For the corresponding lower bound, we just observe that $x:=(k^{-1/p})_{k=1}^n$ satisfies $\|x\|_{p,u}=H_n^{1/u}$ as well as $\|x\|_{p,v}=H_n^{1/v}$, and so, because $\|H_n^{-1/u}x\|_{p,u}=1$, it follows that
  \[
    \|\id\colon \ell_{p,u}^n\to \ell_{p,v}^n\| \geq  \|H_n^{-1/u}x\|_{p,v}  = H_n^{1/v-1/u}.
  \]
Thus, we have
   \[
    \|\id\colon \ell_{p,u}^n\to \ell_{p,v}^n\| = H_n^{1/v-1/u} \asymp (\log n)^{1/v-1/u},
  \]
where the latter asymptotic follows directly from the definition of $H_n$.  
\end{proof}

\section{Proof of Theorem~\ref{thm:main}}

We first give a proof of the case $0<p\neq q<\infty$, where we follow the general strategy set out in \cite[Section~4]{DV20}. Essentially, it relies on interpolation properties of Lorentz spaces and entropy numbers,
as detailed in Section~\ref{sec:interpolation}.   

\begin{proof}[Proof of case (0).]
We only present the proof in the case $0<p<q< \infty$ and note that the case $0<q<p< \infty$ can be proven in a similar way.  For the upper bound, let $0<r<p<s<q< \infty$ with $\frac{1}{s}=\frac{1}{2}(\frac{1}{p}+\frac{1}{q})$ and $\frac{1}{p}=\frac{1}{2}(\frac{1}{r}+\frac{1}{s})$. Then, by Proposition~\ref{pro:reiter},
\[
\ell_{p,u}^n=(\ell_{r}^n,\ell_{s}^n)_{\frac{1}{2},u} \quad \text{and}\quad\ell_{q,v}^n=(\ell_{s}^n,\ell_{\infty}^n)_{\theta,v}
\]
with $\theta=1-\frac{s}{q}\in (0,1)$. Therefore, by Proposition~\ref{pro:entropy-interpolation}, for every $k\in\IN$,
\begin{align*}
e_{4k-3}(\id\colon \ell_{p,u}^n\to \ell_{q,v}^n)
&\lesssim e_{2k-1}(\id\colon \ell_r^n\to \ell_{q,v}^n)^{1/2}e_{2k-1}(\id\colon \ell_s^n\to \ell_{q,v}^n)^{1/2}\\
&\lesssim e_k(\id\colon \ell_r^n\to \ell_{s}^n)^{(1-\theta)/2}e_k(\id\colon \ell_r^n\to \ell_{\infty}^n)^{\theta/2}\\
&\ \times e_k(\id\colon \ell_s^n\to \ell_{s}^n)^{(1-\theta)/2}e_k(\id\colon \ell_s^n\to \ell_{\infty}^n)^{\theta/2}.
\end{align*}
Using the upper bounds in \eqref{eq:classical-p-le-r}, we see that 
\[
e_{4k-3}(\id\colon \ell_{p,u}^n\to \ell_{q,v}^n)\lesssim 2^{-k/n}n^{1/q-1/p},
\]
and using monotonicity completes the proof of the upper bound. 

For the lower bound in the case $0<p<q<\infty$ choose $p_2,q_2$ such that $0<p<p_2<q<q_2<\infty$, as well as
$
\frac{1}{p_1}=\frac{1}{2}(\frac{1}{p}+\frac{1}{p_2})\text{ and }\frac{1}{q_1}=\frac{1}{2}(\frac{1}{q}+\frac{1}{q_2})
$ 
such that $0<p<p_1<p_2<q<q_1<q_2<\infty$. Then, by Proposition~\ref{pro:reiter},
\[
\ell_{p_1}^n=(\ell_{p,u}^n,\ell_{p_2}^n)_{\frac{1}{2},p_1} \quad \text{and}\quad\ell_{q_1}^n=(\ell_{q,v}^n,\ell_{q_2}^n)_{\frac{1}{2},q_1}.
\]
Again by Proposition~\ref{pro:entropy-interpolation}, we have, for every $k\in\IN$, that
\begin{align*}
e_{4k-3}(\id\colon \ell_{p_1}^n\to \ell_{q_1}^n)
&\lesssim e_{2k-1}(\id\colon \ell_{p,u}^n\to \ell_{q_1}^n)^{1/2}e_{2k-1}(\id\colon \ell_{p_2}^n\to \ell_{q_1}^n)^{1/2}\\
&\lesssim e_k(\id\colon \ell_{p,u}^n\to \ell_{q,v}^n)^{1/4}e_k(\id\colon \ell_{p,u}^n\to \ell_{q_2}^n)^{1/4}\\
&\ \times e_k(\id\colon \ell_{p_2}^n\to \ell_{q,v}^n)^{1/4}e_k(\id\colon \ell_{p_2}^n\to \ell_{q_2}^n)^{1/4}.
\end{align*}
Using the upper bound we just proved, we obtain 
\begin{align*}
e_{4k-3}(\id\colon \ell_{p_1}^n\to \ell_{q_1}^n)
&\lesssim e_k(\id\colon \ell_{p,u}^n\to \ell_{q,v}^n)^{1/4}e_k(\id\colon \ell_{p}^n\to \ell_{q_2}^n)^{1/4}\\
&\ \times e_k(\id\colon \ell_{p_2}^n\to \ell_{q}^n)^{1/4}e_k(\id\colon \ell_{p_2}^n\to \ell_{q_2}^n)^{1/4}
\end{align*}
since $p\neq q_2$ and $q\neq p_2$. Plugging in the upper bounds from \eqref{eq:classical-p-le-r} and using monotonicity gives the lower bound.
\end{proof}

We now give the proofs of the cases (I), (II), (III) and (IV).

We first treat the cases which follow from volume estimates. 

\begin{proof}[Proof of (I), (III.1), (IV.1) for $k\in\IN$ and of (II), (III.2), (IV.2) for $k\ge n$.]

In all of these cases Theorem~\ref{thm:main} follows for $k\ge n$ from Proposition~\ref{pro:fundamental} and \eqref{eq:fundamental-lorentz}.

	For the proof of (I), (III.1) and (IV.1) also for $k\le n$, we note that by Proposition~\ref{pro:norm-emb} and Lemma~\ref{lem:volume} in each case it holds that
\[
\|\id\colon \ell_{p,u}^n\to\ell_{q,v}^n\|
\asymp {\rm rv}(\ell_{p,u}^n,\ell_{q,v}^n).
\]
Therefore, Lemma~\ref{lem:vol-lower} and Proposition~\ref{pro:vol-upper} imply
\[
e_k(\id\colon \ell_{p,u}^n\to \ell_{q,v}^n) \asymp 2^{-k/n}{\rm rv}(\ell_{p,u}^n,\ell_{q,v}^n),\quad k\in\IN.
\]
\end{proof}

We now prove the bounds for small $k\le n$ in the remaining cases (II), (III.2), and (IV.2). 
To this end, we will employ \cite[Section 4, Theorem 2]{EN98}. Roughly speaking, it characterizes the behavior of $e_k(\id\colon X\to Y)$, $k<n/2$, for $n$-dimensional quasi-Banach spaces $X$ and $Y$ with common symmetric basis $\{b_1,\dots,b_n\}$ in terms of 
\begin{equation} \label{eq:u-def}
u(X,Y,s)
=\sup_{x\in B_X}u(x,Y,s)
=\sup_{x\in B_X} \Big\| \sum_{i=1}^{n}\min\{x_s^*,x_i^*\}b_i\Big\|_{Y},
\end{equation}
where $s=s(n,k)\in\IN$ is defined by
\begin{equation} \label{eq:s-def}
\frac{k}{\log(n/k+1)}<s\le 1+\frac{k}{\log(n/k+1)}.
\end{equation}
The characterization via $u(X,Y,s)$ has been applied, for instance, by Kaewtem~\cite{Kae17}, and Mayer and Ullrich \cite{MU21}, who proved results for entropy numbers of embeddings between mixed-norm spaces, but apparently has been largely overlooked.
For example, K\"uhn's lower bound \cite{Kue01} for $e_k(\id\colon \ell_p^n\to \ell_q^n)$ with $0<p<q\le \infty$   is a direct consequence (choose $x=s^{-1/p}\sum_{i=1}^{s}e_i$ in the supremum).  The quantity $u(X,Y,s)$ is related to the best $s$-term approximation in the worst case.
The latter concept is traditionally used in upper bounds for $\log n\le k\le n$ and will be discussed in Section~\ref{sec:sparse}.

We need the following formulation of \cite[Section 4, Theorem 2]{EN98}.

\begin{prop}\label{pro:edmunds-netrusov}
Let $n\in\N$. For $k<n/2$, we have
\[
e_k(\id\colon \ell_{p,u}^n\to \ell_{q,v}^n)
\asymp u(\ell_{p,u}^n,\ell_{q,v}^n,s),
\]
where $s\in\IN$ is as in \eqref{eq:s-def} and the implicit constants are independent of $k$ and $n$. 
\end{prop}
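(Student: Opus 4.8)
The plan is to obtain this as a specialization of \cite[Section~4, Theorem~2]{EN98}, which establishes exactly such a two-sided bound $e_k(\id\colon X\to Y)\asymp u(X,Y,s)$ for the identity between two $n$-dimensional quasi-Banach spaces carrying a common normalized symmetric basis, in the range $k<n/2$, with $s\in\IN$ chosen as in \eqref{eq:s-def} and $u(X,Y,s)$ as in \eqref{eq:u-def}. All that remains is to check that $X=\ell_{p,u}^n$ and $Y=\ell_{q,v}^n$ satisfy the hypotheses of that theorem and that our normalizations match those of \cite{EN98}.

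First I would verify the structural assumptions. Both $\|\cdot\|_{p,u}$ and $\|\cdot\|_{q,v}$ are quasi-norms on $\IR^n$, and both are $1$-symmetric with respect to the canonical basis $\{e_i\}_{i=1}^n$, because the value of $\|\sum_i a_i e_i\|_{p,u}$ depends only on the non-increasing rearrangement of $(|a_i|)_{i=1}^n$ and is therefore invariant under sign changes and permutations of the coefficients; the basis is normalized since $\|e_1\|_{p,u}=\|e_1\|_{q,v}=1$. Thus $\ell_{p,u}^n$ and $\ell_{q,v}^n$ are $n$-dimensional symmetric quasi-Banach spaces with a common normalized symmetric basis, which is precisely the setting of \cite[Section~4]{EN98}. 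If the statement in \cite{EN98} is phrased for $p$-Banach (equivalently $r$-Banach) spaces --- as related results such as \cite[Theorem~4.2]{Kae17} are --- I would first pass to equivalent norms exactly as in the proof of Proposition~\ref{pro:fundamental}: by the Aoki--Rolewicz theorem pick $0<r\le 1$, depending only on the (absolute) quasi-norm constants of the two Lorentz spaces, together with $r$-norms equivalent to $\|\cdot\|_{p,u}$ and $\|\cdot\|_{q,v}$. Both sides of the claimed asymptotic equivalence are stable under replacing a quasi-norm by an equivalent one, up to the equivalence constants: for entropy numbers this is the remark on equivalent quasi-norms in Section~\ref{sec:volume}, and for $u(X,Y,s)=\sup_{x\in B_X}\bigl\|\sum_{i=1}^{n}\min\{x_s^*,x_i^*\}e_i\bigr\|_Y$ it follows since the quantity is built only from the unit ball of $X$ and the quasi-norm of $Y$. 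Applying \cite[Section~4, Theorem~2]{EN98} to the $r$-Banach representatives and translating back then yields the assertion, with implicit constants depending only on $p,u,q,v$.

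The main obstacle is not conceptual but a matter of careful bookkeeping: one must check that the integer $s$ and the truncation quantity used in \cite[Section~4]{EN98} coincide, up to the usual constants, with \eqref{eq:s-def} and \eqref{eq:u-def}. In particular, the slack of length one in \eqref{eq:s-def} must be shown to be immaterial, i.e.\ that any two admissible values of $s$ yield equivalent $u(X,Y,s)$; this one confirms by a short rearrangement argument using that $x_s^*$, hence $\min\{x_s^*,x_i^*\}$, is non-increasing in $s$, together with the lattice property of $Y$ from Lemma~\ref{lem:lattice}. One also has to confirm that \cite{EN98} normalizes the common symmetric basis as we do here. Once these conventions are reconciled, no further argument is needed and the proposition follows at once.
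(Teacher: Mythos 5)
Your proposal matches the paper's treatment: Proposition~\ref{pro:edmunds-netrusov} is stated there precisely as a formulation of \cite[Section 4, Theorem 2]{EN98} applied to the symmetric quasi-Banach spaces $\ell_{p,u}^n$ and $\ell_{q,v}^n$, with no further argument given. Your additional bookkeeping (symmetry and normalization of the canonical basis, stability of both sides under equivalent quasi-norms via Aoki--Rolewicz, and the harmlessness of the slack in \eqref{eq:s-def}) is correct but only makes explicit what the paper leaves to the reader.
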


We shall deduce the following result; note that $s<n/\log(3)$ if $k<n/2$.
\begin{prop}\label{pro:truncation}
Let $0<p,q,u,v\le \infty$ and $s\in\N$ such that $1\le s<n/\log(3)$. Then we have the following asymptotics:
\begin{enumerate}
\item[\emph{(II)}] For $p<q=\infty$, we have 
\[
u(\ell_{p,u}^n,\ell_{\infty,v}^n,s)\asymp s^{-1/p}\log(s)^{1/v}.
\]
\item[\emph{(III.2)}] For $p=q<\infty$ and $u>v$, we have 
\[
u(\ell_{p,u}^n,\ell_{p,v}^n,s)\asymp \log(n/s+1)^{1/v-1/u}.
\]
\item[\emph{(IV.2)}] For $p=q=\infty$ and $u< v$, we have 
\[
u(\ell_{\infty,u}^n,\ell_{\infty,v}^n,s)\asymp \log(s+1)^{1/v-1/u}.
\]
\end{enumerate}
All implicit constants are independent of $s,n\in\IN$. 
\end{prop}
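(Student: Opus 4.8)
The plan is to analyze the quantity $u(\ell_{p,u}^n,\ell_{q,v}^n,s) = \sup_{x\in B_{p,u}^n}\|\sum_{i=1}^n \min\{x_s^*,x_i^*\}b_i\|_{q,v}$ directly by exploiting the structure of the truncated vector. Observe that for any $x\in B_{p,u}^n$, the vector $y$ with entries $y_i^* = \min\{x_s^*,x_i^*\}$ is itself non-increasing, with $y_i^* = x_i^*$ for $i\ge s$ and $y_i^* = x_s^*$ for $i< s$; thus $y$ is the vector $x$ with its top $s-1$ coordinates "flattened" to the level $x_s^*$. The upper bounds will follow by combining the decay estimate $x_s^*\lesssim \|x\|_{p,u}\cdot s^{-1/p}$ (resp. $(\log s)^{-1/u}$) from Lemma~\ref{lem:x-star-bound} with a direct computation of $\|y\|_{q,v}$, splitting the sum defining the Lorentz quasi-norm into the flat part $i< s$ and the tail $i\ge s$. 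For the lower bounds, one chooses explicit near-extremal vectors and checks they lie in $B_{p,u}^n$: in case (II) take $x = s^{-1/p}\sum_{i=1}^s e_i$ (so $y$ equals $s^{-1/p}\sum_{i=1}^s e_i$ and $\|y\|_{\infty,v}\asymp s^{-1/p}(\log s)^{1/v}$); in case (IV.2) a similar flat choice works; in case (III.2) the natural candidate is the full logarithmic vector $x_k = c\,k^{-1/p}$ with $c$ normalizing $\|x\|_{p,u}=1$, for which the truncation replaces the top $s$ entries and one computes the tail $\sum_{k=s}^n k^{-1}(\text{level})^v$.

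For case (II), $p<q=\infty$: the upper bound computes, for $v<\infty$,
\[
\|y\|_{\infty,v}^v = \sum_{i=1}^n i^{-1}(y_i^*)^v = (x_s^*)^v\sum_{i=1}^{s-1}i^{-1} + \sum_{i=s}^n i^{-1}(x_i^*)^v,
\]
bound the first sum by $(x_s^*)^v\log s\lesssim \|x\|_{p,u}^v\, s^{-v/p}\log s$ via Lemma~\ref{lem:x-star-bound}, and bound the tail similarly using $x_i^*\lesssim \|x\|_{p,u}i^{-1/p}$, giving $\sum_{i\ge s}i^{-v/p-1}\lesssim s^{-v/p}$; the case $v=\infty$ is the one-term estimate $i^{1/q}y_i^*$ evaluated at the worst $i$, which is of smaller order. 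Since the flat term dominates, the upper bound $s^{-1/p}(\log s)^{1/v}$ follows and matches the lower bound above. For case (IV.2), $p=q=\infty$, $u<v$: here $x_s^*\lesssim \|x\|_{\infty,u}(\log s)^{-1/u}$, and $\|y\|_{\infty,v}^v = (x_s^*)^v\sum_{i< s}i^{-1} + \sum_{i\ge s}i^{-1}(x_i^*)^v$; the first term is $\lesssim (\log s)^{-v/u}\log s = (\log s)^{1-v/u}\cdot(\log s)^{v/v}$, i.e. $(\log s)^{v(1/v-1/u)}$ after collecting, and the tail is controlled by $\sum_{i\ge s}i^{-1}(\log i)^{-v/u}\asymp (\log n)^{1-v/u}$ — one must check that since $u<v$ the exponent $-v/u<-1$, so this tail integral converges and is $\lesssim (\log s)^{1-v/u}$, hence of the same order. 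The lower bound uses $x = (\log s)^{-1/u}\sum_{i=1}^s e_i$, which satisfies $\|x\|_{\infty,u}\asymp 1$ and $\|y\|_{\infty,v}\asymp (\log s)^{1/v - 1/u}$.

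Case (III.2), $p=q<\infty$, $u>v$, is the subtlest and I expect it to be the main obstacle, since here the answer $\log(n/s+1)^{1/v-1/u}$ depends on the \emph{gap} between $s$ and $n$ rather than on $s$ alone, reflecting that the extremal configuration spreads mass over all $n$ coordinates rather than concentrating. For the upper bound one writes $\|y\|_{p,v}^v = (x_s^*)^v\sum_{k< s}k^{v/p-1} + \sum_{k\ge s}k^{v/p-1}(x_k^*)^v$; the first sum is $\asymp (x_s^*)^v s^{v/p}$, and by Lemma~\ref{lem:x-star-bound} $(x_s^*)^v s^{v/p}\lesssim \|x\|_{p,u}^v = 1$ — but this only gives a bound of order $1$, not the logarithmic gain, so one must instead argue more carefully using Hölder's inequality exactly as in Case~4 of the proof of Proposition~\ref{pro:norm-emb}, applied to the truncated vector $y$: split off $k^{-v/u}\cdot k^{v/u-1}$, apply Hölder with exponents $u/v$ and $u/(u-v)$, and observe that $\|y\|_{p,u}\le \|x\|_{p,u}\cdot(\text{const})$ while the "counting" factor becomes $\big(\sum_{k=1}^n k^{-1}\big)^{1/v-1/u}$ restricted appropriately — the point being that for the flattened vector the relevant range where mass can sit is $k\in\{s,\dots,n\}$, producing $\big(\sum_{k=s}^n k^{-1}\big)^{1/v-1/u}\asymp \log(n/s+1)^{1/v-1/u}$; combined with the fact that $y$ has at most the top-$s$ part contributing a bounded amount, one assembles the claimed bound. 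For the lower bound take $x_k = c\, k^{-1/p}$, $k=1,\dots,n$, with $c = H_n^{-1/u}$ so $\|x\|_{p,u}=1$; then $y_k^* = c\min\{s^{-1/p},k^{-1/p}\}$, and
\[
\|y\|_{p,v}^v = c^v\Big( s^{-v/p}\sum_{k=1}^{s-1}k^{v/p-1} + \sum_{k=s}^n k^{v/p-1}k^{-v/p}\Big) \asymp c^v\Big(1 + \sum_{k=s}^n k^{-1}\Big) \asymp H_n^{-v/u}\log(n/s+1),
\]
whence $\|y\|_{p,v}\asymp (\log n)^{-1/u}\log(n/s+1)^{1/v}$; since $s<n/\log 3$ one has $\log(n/s+1)\asymp \log n$ only when $s$ is bounded, so more precisely one checks $\log(n/s+1)^{1/v}\cdot(\log n)^{-1/u}\gtrsim \log(n/s+1)^{1/v-1/u}$, using $\log(n/s+1)\le \log n$. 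This yields the matching lower bound and completes the proposition.
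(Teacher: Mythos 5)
Your cases (II) and (IV.2), and the upper bound in (III.2), essentially reproduce the paper's argument: split $\bigl\|\sum_i\min\{x_s^*,x_i^*\}e_i\bigr\|_{q,v}^v$ into the flat head and the tail, control the head via Lemma~\ref{lem:x-star-bound}, and in (III.2) apply H\"older with exponents $u/v$ and $u/(u-v)$ to the tail over $\{s+1,\dots,n\}$, which is what produces the counting factor $\bigl(\sum_{i>s}i^{-1}\bigr)^{(u-v)/u}\asymp\log(n/s)^{1-v/u}$ (the case $u=\infty$ needs the separate estimate $x_i^*\lesssim i^{-1/p}$ instead of H\"older). Your worry that the $O(1)$ bound on the head is ``not the logarithmic gain'' is unfounded: since $1/v-1/u>0$ and $n/s\ge\log 3$, the target $\log(n/s+1)^{1/v-1/u}$ is bounded below by a constant depending only on $u,v$, so the head's constant contribution is simply absorbed, exactly as in the paper.

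The genuine gap is the lower bound in case (III.2). Your test vector $x_k=H_n^{-1/u}k^{-1/p}$, $k=1,\dots,n$, yields $u(x,\ell_{p,v}^n,s)\asymp(\log n)^{-1/u}\log(n/s+1)^{1/v}$, and the inequality you then invoke, $(\log n)^{-1/u}\log(n/s+1)^{1/v}\gtrsim\log(n/s+1)^{1/v-1/u}$ ``using $\log(n/s+1)\le\log n$'', goes the wrong way: since the exponent $-1/u$ is negative, $\log(n/s+1)\le\log n$ gives precisely the reverse estimate. The loss is real, not a writing slip: take $s=\lfloor n/2\rfloor$, which is admissible because $s<n/\log 3$; then your vector gives only $\asymp(\log n)^{-1/u}\to 0$, while the proposition asserts a lower bound $\asymp(\log 3)^{1/v-1/u}\asymp 1$. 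The defect is that the fully decaying vector spends most of its $\ell_{p,u}$-budget on the first $s$ coordinates, which the truncation then flattens away. The paper's choice avoids exactly this: test with the already-flat vector $x=s^{-1/p}\sum_{i\le s}e_i+\sum_{i>s}i^{-1/p}e_i$, which is unchanged by truncation at level $x_s^*$ and satisfies $\|x\|_{p,u}\asymp(1+\log(n/s))^{1/u}$ and $\|x\|_{p,v}\asymp(1+\log(n/s))^{1/v}$, so that after normalizing in $\ell_{p,u}^n$ one obtains the matching lower bound $\log(n/s+1)^{1/v-1/u}$.
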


\begin{proof}
Let $0<p,u\le \infty$ and $x\in B_{p,u}^n$ be such that $x_1^*>\cdots>x_n^*>0$. It is easy to see that the supremum remains the same if we only consider such $x$'s. For $v<\infty$, we have 
\begin{equation}\label{eq:u}
\Big\| \sum_{i=1}^{n}\min\{x_s^*,x_i^*\}e_i\Big\|_{q,v}^v
=(x_s^*)^v\sum_{i=1}^{s}i^{v/q-1} +\sum_{i=s+1}^{n}(x_i^*)^v i^{v/q-1},
\end{equation}
whereas for $v=\infty$, we have
\[
\Big\| \sum_{i=1}^{n}\min\{x_s^*,x_i^*\}e_i\Big\|_{q,\infty}
=\max\{x_s^* s^{1/q}, \sup_{s+1\le i\le n}x_i^* i^{1/q}\}.
\]
We will use Lemma~\ref{lem:x-star-bound}, i.e., that
\begin{equation} \label{eq:x-star}
x_i^*\lesssim 
\begin{cases}
i^{-1/p}&\colon p<\infty,\\
(\log i)^{-1/u}&\colon p=\infty.
\end{cases}
\end{equation}

We distinguish several cases and carry out the computations only for $v<\infty$ (they are in fact easier for $v=\infty$).

\noindent \textbf{Case (II)} 

Let $p<q=\infty$. If $v<\infty$, we estimate \eqref{eq:u} by
\[
(x_s^*)^v\sum_{i=1}^{s}i^{-1} +\sum_{i=s+1}^{n}(x_i^*)^v i^{-1}
\lesssim s^{-v/p}\log s +\sum_{i=s+1}^{n}i^{-v/p-1}
\lesssim s^{-v/p}\log s.
\]
The upper bound for $v=\infty$ is a direct consequence of \eqref{eq:x-star}.

The lower bound is achieved by $x=s^{-1/p}\sum_{i=1}^{s}e_i$, which by \eqref{eq:fundamental-lorentz} satisfies
\[
\|x\|_{p,u}\asymp 1 \qquad \text{and}\qquad \|x\|_{\infty,v}\asymp s^{-1/p}(\log s)^{1/v}.
\]

\noindent \textbf{Case (III.2)} 

Let $0<p=q<\infty$ and $0<v<u\le \infty$. We first prove the upper bound.

By means of \eqref{eq:u} and \eqref{eq:x-star} we have
\begin{equation} \label{eq:III-2-up}
u(\ell_{p,u}^{n},\ell_{p,v}^n,s)^v
\lesssim 1+\sum_{i=s+1}^{n}(x_i^*)^v i^{v/p-1}.
\end{equation}
In the case of $u<\infty$, we use Hölder's inequality with $\beta=\frac{v}{p}-\frac{v}{u}$, $\varphi=u/v>1$ and $\varphi^*=u/(u-v)$, to obtain
\begin{align}
\notag\sum_{i=s+1}^{n}(x_i^*)^v i^{\beta} i^{-\beta}i^{v/p-1}
&\le
\Big(\sum_{i=s+1}^{n}(x_i^*)^{v\varphi}i^{\beta \varphi}\Big)^{1/\varphi}\cdot
\Big(\sum_{i=s+1}^{n}i^{(-\beta+v/p-1) \varphi^*}\Big)^{1/\varphi^*}\\
\label{eq:III.2}&= \Big(\sum_{i=s+1}^{n}(x_i^*)^{u}i^{u/p-1}\Big)^{v/u}
\cdot \Big(\sum_{i=s+1}^{n}i^{-1}\Big)^{(u-v)/u}\\
\notag&\lesssim \log(n/s)^{1-v/u}.
\end{align}
For $u=\infty$, we use $x_i^*\lesssim i^{-1/p}$ and obtain 
\[
\sum_{i=s+1}^{n}(x_i^*)^v i^{v/p-1}\lesssim \log(n/s).
\]
Combined with \eqref{eq:III-2-up}, this shows that 
\[
u(\ell_{p,u}^{n},\ell_{p,v}^n,s)\lesssim \log(n/s)^{1/v-1/u}.
\]
The lower bound is achieved by $x=s^{-1/p}\sum_{i=1}^{s}e_i+\sum_{i=s+1}^{n}i^{-1/p}e_i$, which satisfies 
\[
\|x\|_{p,u}
\asymp (1+ \log(n/s))^{1/u}\qquad \text{and}\qquad \|x\|_{p,v}\asymp (1+ \log(n/s))^{1/v}.
\]

\noindent \textbf{Case (IV.2)} 

Let $p=q=\infty$ and $0<u<v\le \infty$. We proceed as in the proof of case (II) and replace $s^{-1/p}$ by $(\log s)^{-1/u}$.

\end{proof}

We can now complete the proof of Theorem~\ref{thm:main} using Propositions~\ref{pro:edmunds-netrusov} and~\ref{pro:truncation}.

\begin{proof}[Proof of (II), (III.2) and (IV.2) for $k\le n$]

We combine Propositions~\ref{pro:edmunds-netrusov} and~\ref{pro:truncation} to obtain the asymptotics of 
\[
e_k(\id\colon \ell_{p,u}^n\to \ell_{q,v}^n)
\]
for $k<n/2$ in terms of $s\asymp\frac{k}{\log(n/k+1)}$. For $n/2\le k\le n$, we use monotonicity. All implicit constants are independent of $n$ and $k$. Further note that, for $k\le \log n$, we have
\[
s\le\frac{\log n}{\log(1+n/\log n)}\le C,
\]
where $C\in(0,\infty)$ is some absolute constant. Therefore, after looking at Proposition~\ref{pro:norm-emb}, monotonicity yields
\[
e_k(\id\colon \ell_{p,u}^n\to \ell_{q,v}^n)\asymp \|\id\colon \ell_{p,u}^n\to \ell_{q,v}^n\|.
\]

\noindent \textbf{Case (II)} ($p<\infty$ and $q=\infty$)

For $k<n/2$, we have
\[
e_k(\id\colon \ell_{p,u}^n\to \ell_{\infty,v}^n)
\asymp s^{-1/p}\log(s),
\]
which proves the theorem in this case.

\noindent \textbf{Case (III.2)} ($p=q<\infty$ and $u>v$) 

For $k<n/2$, we have 
\[
e_k(\id\colon \ell_{p,u}^n\to \ell_{\infty,v}^n)
\asymp \log(n/s+1)^{1/v-1/u}
\asymp \log(n/k+1)^{1/v-1/u},
\]
which proves the theorem in this case.

\noindent \textbf{Case (IV.2)} ($p=q=\infty$ and $u<v$) 

For $k<n/2$, we have 
\[
e_k(\id\colon \ell_{p,u}^n\to \ell_{\infty,v}^n)
\asymp \log(s+1)^{1/v-1/u},
\]
which proves the theorem in this case.
\end{proof}

\section{Sparse Approximation}\label{sec:sparse}

Since the proof of Theorem~\ref{thm:main} in the cases (II), (III.2), and (IV.2) in the intermediate range $\log n\le k\le n$, and in particular Proposition~\ref{pro:edmunds-netrusov} (taken from \cite[Section 4, Theorem 2]{EN98}), is very much related to ideas of sparse approximation, we provide some background and an alternative proof.

In general, for positive integers $s\le n$ the error of best $s$-term approximation of a vector $x\in\IR^n$ in a quasi-norm $\|\cdot\|_Y$ is given by
\[
\sigma_s(x)_Y=\inf\big\{\|x-z\|_Y\colon z\in\IR^n \text{ with }|\{i\colon z_i\neq 0\}|\le s\big\}.
\]
It measures (with respect to $\|\cdot\|_Y$) how far $x$ is from being $s$-sparse, i.e., how far from being supported on $s$ coordinates. In contrast, for obtaining the quantity $u(x,Y,s)$ in \eqref{eq:u-def} only truncation of the entries of $x$ is permitted. However, assuming symmetry, both quantities are suitable for characterizing the behavior of entropy numbers.

\begin{prop}\label{pro:entropy-sparse}
Let $n\in\IN$ and let $X,Y$ be $n$-dimensional quasi-Banach spaces with quasi-norm constants $C_X,C_Y\ge 1$ and a common symmetric basis $\{e_1,\dots,e_n\}$. For $k<n/2$, we have
\[
e_k(\id\colon X\to Y)
\asymp \sup_{x\in B_X}\sigma_s(x)_Y,
\]
where $s\in\IN$ is the minimal integer with $s>\frac{k}{\log(n/k+1)}$ and the implicit constants depends only on $\max\{C_X,C_Y\}$. Note that sparsity is with respect to the basis $\{e_1,\dots,e_n\}$.
\end{prop}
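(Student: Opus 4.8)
The strategy is to sandwich the best $s$-term approximation quantity between two truncation quantities of the form $u(X,Y,\cdot)$ from \eqref{eq:u-def}, and then invoke Proposition~\ref{pro:edmunds-netrusov} (or rather its abstract analogue \cite[Section 4, Theorem 2]{EN98}) together with the monotonicity/doubling behaviour of $u(X,Y,s)$ in $s$. More precisely, I would first record the elementary two-sided comparison
\[
u(x,Y,2s)\le \sigma_s(x)_Y \le C\, u(x,Y,s), \qquad x\in\IR^n,
\]
valid for symmetric $Y$ with quasi-norm constant $C_Y$ (and $C=C(C_Y)$). The right inequality is immediate: given $x$ with $x_1^*\ge x_2^*\ge\cdots$, the vector $z$ keeping the $s$ largest entries of $x$ and zeroing the rest satisfies $\|x-z\|_Y=\|\sum_{i>s}x_i^*e_i\|_Y$, and since $x_i^*\le x_s^*$ for $i>s$, Lemma~\ref{lem:lattice} gives $\|x-z\|_Y\le K_Y\|\sum_{i=1}^n\min\{x_s^*,x_i^*\}e_i\|_Y = K_Y\,u(x,Y,s)$ (using symmetry to place the nonzero coordinates on $\{1,\dots,n\}$ and the fact that $\min\{x_s^*,x_i^*\}=x_i^*$ for $i>s$ while the first $s$ terms only add mass). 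For the left inequality, if $z$ is any $s$-sparse vector then $x-z$ has at least $n-s$ coordinates equal to the corresponding coordinates of $x$, so its $(2s)$-th largest entry in absolute value is at least $x_{2s}^*$ wait — more carefully, at least $n-s$ of the entries of $x-z$ agree with those of $x$, hence $(x-z)_j^* \ge x_{j+s}^*$ for all $j$; in particular, for $j\le s$ we get $(x-z)_j^*\ge x_{2s}^*$, so by Lemma~\ref{lem:lattice} again $\|x-z\|_Y \gtrsim \|\sum_{i=1}^{2s}x_{2s}^* e_i + \sum_{i>2s}0\cdot e_i\|_Y$, and after completing to $\sum_i \min\{x_{2s}^*,x_i^*\}e_i$ (which only adds the tail, increasing the norm by at most a $K_Y$ factor via Lemma~\ref{lem:lattice} in the other direction) one obtains $\sigma_s(x)_Y\gtrsim u(x,Y,2s)$. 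Taking suprema over $x\in B_X$ turns this into $u(X,Y,2s)\lesssim \sup_{x\in B_X}\sigma_s(x)_Y\lesssim u(X,Y,s)$.

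Next I would observe that $s\mapsto u(X,Y,s)$ is comparable at $s$ and $2s$, i.e.\ $u(X,Y,s)\asymp u(X,Y,2s)$ with constants depending only on $\max\{C_X,C_Y\}$: the inequality $u(X,Y,2s)\le u(X,Y,s)$ is monotonicity of the truncation (replacing $x_s^*$ by the larger $x_{2s}^*$... wait, $x_{2s}^*\le x_s^*$, so truncating at level $x_{2s}^*$ gives a pointwise smaller vector, hence $u(X,Y,2s)\le K_Y u(X,Y,s)$ by Lemma~\ref{lem:lattice}), while the reverse $u(X,Y,s)\lesssim u(X,Y,2s)$ follows from the same doubling argument already implicit in the proof of \cite[Section 4, Theorem 2]{EN98} — this is exactly the robustness of the characterization under halving $s$, which is why Proposition~\ref{pro:edmunds-netrusov} is insensitive to the precise choice of $s$ in the window \eqref{eq:s-def}. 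Combining the two displays with Proposition~\ref{pro:edmunds-netrusov} (more precisely, its abstract version for general symmetric $X,Y$, which is what \cite[Section 4, Theorem 2]{EN98} actually gives) yields, for $s$ the minimal integer exceeding $k/\log(n/k+1)$,
\[
e_k(\id\colon X\to Y)\asymp u(X,Y,s)\asymp \sup_{x\in B_X}\sigma_s(x)_Y,
\]
which is the claim.

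The main obstacle is the left-hand (lower) bound $\sigma_s(x)_Y\gtrsim u(x,Y,2s)$: one must argue carefully that removing an arbitrary $s$-sparse vector cannot kill more than the top $2s$ entries down to the level $x_{2s}^*$, and then one must reconstitute the full truncated profile $\sum_i\min\{x_{2s}^*,x_i^*\}e_i$ from the ``flat top'' $\sum_{i\le 2s}x_{2s}^*e_i$ — this reconstitution uses Lemma~\ref{lem:lattice} but in the direction of bounding the norm of a sum by the norm of a pointwise-larger sum, and one should be mindful of the quasi-norm constants accumulating through the rearrangement-plus-truncation manipulations. A secondary point requiring care is that Proposition~\ref{pro:edmunds-netrusov} as stated is specialized to Lorentz spaces, so I would note that the argument of \cite[Section 4, Theorem 2]{EN98} applies verbatim to arbitrary $n$-dimensional symmetric quasi-Banach spaces with a common basis (the only structural inputs being symmetry and the lattice property, both available here via Lemma~\ref{lem:lattice}), and invoke that more general form.
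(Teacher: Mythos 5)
Your overall route is exactly the paper's: invoke \cite[Section 4, Theorem 2]{EN98} in its general form for symmetric quasi-Banach spaces (Proposition~\ref{pro:edmunds-netrusov} is merely its Lorentz specialization) to get $e_k(\id\colon X\to Y)\asymp u(X,Y,s)$, prove $\sup_{x\in B_X}\sigma_s(x)_Y\lesssim u(X,Y,s)$ via Lemma~\ref{lem:lattice}, prove $u(x,Y,2s)\lesssim \sigma_s(x)_Y$, and close the gap with the doubling property $u(X,Y,s)\lesssim u(X,Y,2s)$ for $s<n/2$. On the last point, do not leave the doubling as ``implicit in the proof of EN98'': it is stated there explicitly as \cite[Section 4, Lemma 4 (i)]{EN98}, and you should also record that $k<n/2$ forces $s<n/2$ so that it applies.

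There is, however, one step whose justification as written would fail. Having shown $\|x-z\|_Y\gtrsim \big\|\sum_{i\le 2s}x_{2s}^*e_i\big\|_Y$ for every $s$-sparse $z$, you pass to $u(x,Y,2s)$ by asserting that completing the flat top to $\sum_i\min\{x_{2s}^*,x_i^*\}e_i$ ``only adds the tail, increasing the norm by at most a $K_Y$ factor via Lemma~\ref{lem:lattice} in the other direction.'' Lemma~\ref{lem:lattice} only controls the norm of a coordinatewise \emph{smaller} vector by that of a larger one; the full truncated profile is coordinatewise larger than the flat top, and adding the tail $\sum_{i>2s}x_i^*e_i$ can increase the norm by an unbounded factor (already in $\ell_1^n$ with a long, slowly decaying tail), so no such constant-factor bound exists. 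The repair is the quasi-triangle splitting used in the paper: $u(x,Y,2s)\le C_Y\big\{\big\|\sum_{i\le s}x_{2s}^*e_i\big\|_Y+K_Y\big\|\sum_{i>s}x_i^*e_i\big\|_Y\big\}$, because $\sum_{i=s+1}^{2s}x_{2s}^*e_i+\sum_{i>2s}x_i^*e_i$ is coordinatewise dominated by $\sum_{i>s}x_i^*e_i$, and the first piece equals $\big\|\sum_{i=s+1}^{2s}x_{2s}^*e_i\big\|_Y$ by symmetry, hence is also dominated by the same quantity up to $K_Y$. Both pieces are then $\lesssim\sigma_s(x)_Y$, the needed comparison $\big\|\sum_{i>s}x_i^*e_i\big\|_Y\lesssim\sigma_s(x)_Y$ following precisely from your (correct and useful) observation that $(x-z)_j^*\ge x_{j+s}^*$ for every $s$-sparse $z$, combined with symmetry and Lemma~\ref{lem:lattice}. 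With this assembly corrected, your argument coincides with the paper's proof.
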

\begin{proof}
By \cite[Section 4, Theorem 2]{EN98} the statement holds with $\sup_{x\in B_X}\sigma_s(x)_Y$ replaced by $u(X,Y,s)$. We will show that in fact for $s<n/2$
\begin{equation} \label{eq:u-sigma}
\sup_{x\in B_X}\sigma_s(x)_Y\asymp u(X,Y,s),
\end{equation}
where the implicit constants are independent of $s$. Then it remains to note that for $k<n/2$ we also have $s<n/2$. Using symmetry and Lemma~\ref{lem:lattice}, the upper bound in \eqref{eq:u-sigma} follows from
\[
\sigma_s(x)_Y
=\left\|\sum_{i=s+1}^{n} x_{i}^* e_i \right\|_Y
\le K_Y\left\|\sum_{i=1}^{n} \min\{x_s^*,x_{i}^*\} e_i \right\|_Y
= K_Y u(x,Y,s),
\]
and taking the supremum over $x=\sum_{i=1}^{n}x_i e_i\in B_X$.

For the lower bound in \eqref{eq:u-sigma} we write
\begin{align*}
u(x,Y,2s)&=\left\|\sum_{i=1}^s x_{2s}^* e_i + \sum_{i=s+1}^{2s} x_{2s}^* e_i + \sum_{i=2s+1}^{n} x_{i}^* e_i\right\|_Y\\
&\le C_Y\left\{\left\|\sum_{i=1}^s x_{2s}^* e_i \right\|_Y + K_Y\left\| \sum_{i=s+1}^{n} x_{i}^* e_i \right\|_Y\right\}\le 2\,C_YK_Y \sigma_s(x)_Y,
\end{align*}
take the supremum, and note that by \cite[Section 4, Lemma 4 (i)]{EN98} we have, for $s<n/2$, 
\begin{equation} \label{eq:u-doubling}
u(X,Y,2s)\gtrsim u(X,Y,s)
\end{equation}
with an implicit constant only depending on $\max\{C_X,C_Y\}$.

\end{proof}

\begin{remark}\label{rem:temlyakov}
Proposition~\ref{pro:entropy-sparse} can be seen as a complement to Theorem 3.1 in \cite{Tem13} by Temlyakov who proves an upper bound on the entropy numbers under polynomial decay assumption on the best $s$-term approximation numbers uniformly over compact sets.
\end{remark}

We note the following consequence of \eqref{eq:u-sigma} and \eqref{eq:u-doubling}, which shows that best $s$-term approximation numbers exhibit regular decay.

\begin{cor}
Assume $X$ and $Y$ are as in Proposition~\ref{pro:entropy-sparse}. Then, for $s<n/2$, 
\[
\sup_{x\in B_X}\sigma_{2s}(x)_Y\asymp\sup_{x\in B_X}\sigma_s(x)_Y,
\]
where the implicit constants depend only on $\max\{C_X,C_Y\}$.
\end{cor}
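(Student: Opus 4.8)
The plan is to deduce the corollary from the two comparisons established inside the proof of Proposition~\ref{pro:entropy-sparse}: the equivalence \eqref{eq:u-sigma} between the best $s$-term approximation width and the truncation quantity $u(X,Y,\cdot)$, and the doubling estimate \eqref{eq:u-doubling}. One of the two inequalities needs no work and no restriction on $s$: every best $s$-term approximant of $x$ is in particular $2s$-sparse, so $\sigma_{2s}(x)_Y\le\sigma_s(x)_Y$ for all $x$, and taking the supremum over $B_X$ gives $\sup_{x\in B_X}\sigma_{2s}(x)_Y\le\sup_{x\in B_X}\sigma_s(x)_Y$.

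For the reverse inequality I would pass through $u(X,Y,\cdot)$. By \eqref{eq:u-sigma} one has $\sup_{x\in B_X}\sigma_s(x)_Y\asymp u(X,Y,s)$, and, in the range where \eqref{eq:u-sigma} is still applicable at the doubled scale, also $\sup_{x\in B_X}\sigma_{2s}(x)_Y\asymp u(X,Y,2s)$. Hence it suffices to show $u(X,Y,s)\asymp u(X,Y,2s)$. The bound $u(X,Y,s)\lesssim u(X,Y,2s)$ is precisely \eqref{eq:u-doubling}, while the reverse bound $u(X,Y,2s)\lesssim u(X,Y,s)$ is immediate from Lemma~\ref{lem:lattice}: for a fixed positive non-increasing $x$ one has $\min\{x_{2s}^*,x_i^*\}\le\min\{x_s^*,x_i^*\}$ coordinatewise, so $\|\sum_i\min\{x_{2s}^*,x_i^*\}e_i\|_Y\le K_Y\|\sum_i\min\{x_s^*,x_i^*\}e_i\|_Y$, and taking the supremum over $x\in B_X$ yields the claim. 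Chaining the three comparisons gives $\sup_{x\in B_X}\sigma_s(x)_Y\asymp\sup_{x\in B_X}\sigma_{2s}(x)_Y$, with implicit constants depending only on $\max\{C_X,C_Y\}$.

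The step I expect to be the main obstacle is the admissible range of $s$. Invoking \eqref{eq:u-sigma} at the scale $2s$ requires $2s<n/2$, so the argument above is clean for $4s\le n$, which is exactly the regime relevant to Proposition~\ref{pro:entropy-sparse} (there $s\asymp k/\log(n/k+1)$ with $k<n/2$, hence $s$ stays well below $n/2$). For $s$ within a bounded distance of $n/2$ the two widths are in general not comparable — already for $X=Y=\ell_p^n$ one computes $\sup_{x\in B_X}\sigma_s(x)_Y\asymp(1-s/n)^{1/p}$, which fails to double near $s=n/2$ — so in the write-up I would either state the corollary for $4s\le n$ or phrase the hypothesis so that $2s$ remains bounded away from $n$; under such a hypothesis the proof is exactly the three-line chaining above.

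Finally, I would remark that the proof in fact shows the stronger statement that all three quantities $\sup_{x\in B_X}\sigma_s(x)_Y$, $u(X,Y,s)$, and $u(X,Y,2s)$ are mutually equivalent in the admissible range, so the regularity of the decay of the best $s$-term widths is just a reflection of the (trivial) monotonicity and the (less trivial, via \eqref{eq:u-doubling}) reverse monotonicity of $u(X,Y,\cdot)$.
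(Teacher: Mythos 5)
Your proof is correct and is essentially the paper's own argument: the corollary is asserted there as a direct consequence of \eqref{eq:u-sigma} and \eqref{eq:u-doubling}, i.e.\ precisely your chain $\sup_{x\in B_X}\sigma_s(x)_Y\asymp u(X,Y,s)\asymp u(X,Y,2s)\asymp \sup_{x\in B_X}\sigma_{2s}(x)_Y$, with the trivial inequality $\sigma_{2s}(x)_Y\le \sigma_s(x)_Y$ and the lattice bound $u(X,Y,2s)\lesssim u(X,Y,s)$ supplying the easy directions. Your caveat on the range is also well founded: invoking \eqref{eq:u-sigma} at scale $2s$ requires $2s<n/2$, and your example $X=Y=\ell_p^n$, where $\sup_{x\in B_X}\sigma_s(x)_Y=(1-s/n)^{1/p}$, shows that the two-sided comparison cannot hold with constants depending only on $\max\{C_X,C_Y\}$ as $s$ approaches $n/2$, so the statement should indeed be read with $4s\le n$ (or with constants depending on how far $2s/n$ stays from $1$). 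The only loose point is your side remark that in Proposition~\ref{pro:entropy-sparse} the parameter $s$ stays \emph{well} below $n/2$: for $k<n/2$ one only gets $s\le 1+k/\log(n/k+1)\lesssim n/(2\log 3)$, which is below $n/2$ but not below $n/4$; this concerns only the contextual comment, not your proof of the corollary itself.
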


In the following, we will give the above mentioned alternative proof of Theorem~\ref{thm:main}. Since $x_{s+1}^*=\sigma_s(x)_{\infty}$ holds, Lorentz quasi-norms can be understood via best $s$-term approximation. We believe the following estimates to be of independent interest. The case $u=\infty$ and $v=q>p$ is for example covered in \cite[Prop. 2.11]{FR13}. 

\begin{prop}\label{pro:s-term-app}
	Let $0<p,q,u,v\le \infty$ and $s\le n$ be positive integers and assume that $x\in\IR^n$. For $q=\infty$, we have
	\[
	\sigma_s(x)_{\infty,v}
	\lesssim
	\|x\|_{p,u}
	\begin{cases}
		s^{-1/p}(\log s)^{1/v}&\colon p<\infty,\\
		(\log s)^{1/v-1/u}&\colon p=\infty\text{ and } u<v,\\
	\end{cases}
	\]
	and, for $p=q<\infty$ and $v<u$, we have
	\[
	\sigma_s(x)_{p,v}
	\lesssim
	\|x\|_{p,u}
	(\log(n/s)+1)^{1/v-1/u}.
	\]
	All implicit constants are independent of $n$ and $s$.
\end{prop}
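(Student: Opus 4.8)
The plan is to follow closely the computations in the proof of Proposition~\ref{pro:truncation}, the new feature being that $\sigma_s(x)_Y$ involves the \emph{shifted} tail $(x_i^*)_{i>s}$ rather than a truncation at the level $x_s^*$. First I would record the standard fact that, by symmetry together with the monotonicity of the Lorentz quasi-norm in the non-increasing rearrangement (equivalently, Lemma~\ref{lem:lattice}), the best $s$-term approximation is realized by discarding the $s$ largest coordinates, so that
\[
\sigma_s(x)_Y=\Big\|\sum_{i=s+1}^{n}x_i^*e_i\Big\|_Y.
\]
After normalizing $\|x\|_{p,u}=1$, everything reduces to estimating this quasi-norm using the decay bounds $x_i^*\lesssim i^{-1/p}$ (for $p<\infty$) and $x_i^*\lesssim(\log i)^{-1/u}$ (for $p=\infty$) from Lemma~\ref{lem:x-star-bound}.

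For $q=\infty$ and $v<\infty$ I would write $\sigma_s(x)_{\infty,v}^v=\sum_{m\ge 1}m^{-1}(x_{s+m}^*)^v$ and split the sum at $m=s$. On $\{m\le s\}$ I bound $x_{s+m}^*\le x_{s+1}^*$, which is $\lesssim s^{-1/p}$ (resp.\ $\lesssim(\log(s+1))^{-1/u}$), and use $\sum_{m\le s}m^{-1}\lesssim\log(s+1)$; on $\{m>s\}$ I use $x_{s+m}^*\lesssim(s+m)^{-1/p}\le m^{-1/p}$ (resp.\ $\lesssim(\log m)^{-1/u}$), so the remaining series is dominated by its first terms, namely $\sum_{m>s}m^{-1-v/p}\lesssim s^{-v/p}$ when $p<\infty$, and $\sum_{m>s}m^{-1}(\log m)^{-v/u}\lesssim(\log(s+1))^{1-v/u}$ when $p=\infty$ (here $u<v$ gives $v/u>1$, so the series converges). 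Taking $v$-th roots yields the two claimed bounds; the cases $v=\infty$ and $u=\infty$ are immediate from $\sigma_s(x)_\infty=x_{s+1}^*$ and Lemma~\ref{lem:x-star-bound}, and ranges of $s$ bounded by an absolute constant are absorbed into the implicit constants.

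For $p=q<\infty$ and $v<u$ I would compute $\sigma_s(x)_{p,v}^v=\sum_{m=1}^{n-s}m^{v/p-1}(x_{s+m}^*)^v$ and again split at $m=s$, so that the near-diagonal block corresponds to original indices $s<i\le 2s$. On $\{m\le s\}$ I bound $x_{s+m}^*\le x_{s+1}^*\lesssim s^{-1/p}$ and use $\sum_{m\le s}m^{v/p-1}\asymp s^{v/p}$, obtaining a contribution $\lesssim 1$, which is harmless since $(\log(n/s)+1)^{1-v/u}\ge 1$. On $\{m>s\}$ I use that $i:=s+m$ satisfies $m\ge i/2$, hence $m^{v/p-1}\lesssim i^{v/p-1}$, reducing the sum to $\sum_{i=2s+1}^{n}i^{v/p-1}(x_i^*)^v$; this is exactly the quantity estimated in Proposition~\ref{pro:truncation}, Case (III.2), via Hölder's inequality with $\varphi=u/v$, $\varphi^*=u/(u-v)$ and $\beta=v/p-v/u$, which together with $\sum_{i=2s+1}^{n}i^{-1}\lesssim\log(n/s)+1$ and $\|x\|_{p,u}=1$ gives $\lesssim(\log(n/s)+1)^{1-v/u}$; the subcase $u=\infty$ follows directly from $x_i^*\lesssim i^{-1/p}$ and $\sum_{s<i\le n}i^{-1}\lesssim\log(n/s)+1$. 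Summing the two contributions and taking $v$-th roots finishes the argument.

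The main obstacle is precisely this index shift from $i$ to $i-s$: unlike the truncation quantity $u(x,Y,s)$ of Proposition~\ref{pro:truncation}, it forces a separate treatment of the near-diagonal block $s<i\le 2s$, and one must check that this block contributes only a constant (absorbed by the logarithmic main term) while the far part $i>2s$ is comparable, up to multiplicative constants, with the sum already handled in Proposition~\ref{pro:truncation}. Beyond that, the remaining work—the boundary case $s\ge n/2$ where the far sum is empty, the endpoints $u=\infty$ and $v=\infty$, and the convention of replacing $\log s$ by $\log(s+1)$ for small $s$—is routine but needs a little care.
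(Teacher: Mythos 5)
Your proposal is correct and follows essentially the same route as the paper: reduce $\sigma_s(x)_Y$ to the Lorentz quasi-norm of the rearranged tail, apply the decay bounds of Lemma~\ref{lem:x-star-bound}, split the shifted sum at index $2s$ (the paper packages exactly this splitting as Lemma~\ref{lem:tail-asymp}), and reduce the case $p=q<\infty$, $v<u$ to the H\"older computation already used in case (III.2) of Proposition~\ref{pro:truncation}. The only cosmetic difference is that you handle the near-diagonal block $s<i\le 2s$ of the $p=q$ case explicitly via $m\ge i/2$, where the paper simply bounds $(i-s)^{v/p-1}\le i^{v/p-1}$ before invoking H\"older.
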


For the proof of Proposition~\ref{pro:s-term-app} we need the following.
\begin{lem}\label{lem:tail-asymp}
Let $s\le n$ be positive integers. Then the following estimates hold:
\begin{enumerate}[label=(\roman*)]
\item For $\lambda>0$, we have
\[
\sum_{i=s+1}^n (i-s)^{-1}i^{-\lambda}
\lesssim s^{-\lambda}\log s.
\]
\item For $\lambda>1$, we have
\[
\sum_{i=s+1}^n (i-s)^{-1}(\log i)^{-\lambda}
\lesssim (\log s)^{-\lambda+1}.
\]
\end{enumerate}
The implicit constants depend only on the parameter $\lambda$. 
\end{lem}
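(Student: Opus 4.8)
The plan is to reduce both estimates to elementary one-dimensional sum-versus-integral comparisons via the substitution $j=i-s$, splitting the resulting sum at $j=s$ and handling the two halves separately. (Throughout, the sum is understood to be $0$ when $n\le s$, and when $n\le 2s$ only the ``lower'' half described below survives; both cases are harmless.)

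For (i), I would write the left-hand side as $\sum_{j=1}^{n-s}j^{-1}(j+s)^{-\lambda}$. On the range $1\le j\le s$ the bound $j+s\ge s$ pulls out $s^{-\lambda}$ and leaves $\sum_{j=1}^s j^{-1}=H_s$ (see \eqref{def:Hn}), which is $\asymp\log s$ by the elementary estimate $\log s\le H_s\le 3\log s$ already used above; this contributes $\lesssim s^{-\lambda}\log s$. On the range $j>s$ I would instead use $(j+s)^{-\lambda}\le j^{-\lambda}$ and compare the convergent tail $\sum_{j>s}j^{-1-\lambda}$ with $\int_s^\infty t^{-1-\lambda}\,\dd t=s^{-\lambda}/\lambda$; here $\lambda>0$ is exactly what guarantees convergence of this tail. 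Adding the two contributions gives (i).

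For (ii) I would proceed in the same way, starting from $\sum_{j=1}^{n-s}j^{-1}(\log(j+s))^{-\lambda}$: on $1\le j\le s$ the bound $\log(j+s)\ge\log s$ yields a contribution $\lesssim(\log s)^{-\lambda}H_s\asymp(\log s)^{-\lambda+1}$, while on $j>s$ I would use $\log(j+s)\ge\log j$ and compare $\sum_{j>s}j^{-1}(\log j)^{-\lambda}$ with $\int_s^\infty t^{-1}(\log t)^{-\lambda}\,\dd t=(\log s)^{-\lambda+1}/(\lambda-1)$, the substitution $w=\log t$ making the finiteness for $\lambda>1$ transparent. Summing the two pieces yields (ii). Since every summand in sight is monotone in its index, none of these comparisons poses a real difficulty; the only genuinely delicate point is the degenerate case $s=1$, where $\log s=0$ and one must read $\log s$ as $\log(s+1)$ in the statement — in line with the convention announced in the Notation paragraph — so in the write-up I would simply assume $s\ge 2$ and invoke that convention otherwise.
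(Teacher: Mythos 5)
Your proposal is correct and follows essentially the same route as the paper: after the shift $j=i-s$ your split at $j=s$ is exactly the paper's decomposition at $i=2s$, with the first piece bounded by pulling out $s^{-\lambda}$ (resp.\ $(\log s)^{-\lambda}$) against the harmonic sum $H_s$, and the second piece bounded by a convergent tail, $\sum_{j>s}j^{-1-\lambda}$ resp.\ $\sum_{j>s}j^{-1}(\log j)^{-\lambda}$. The only differences are cosmetic (index shift and explicit integral comparison instead of the paper's $(i-s)^{-1}\le 2i^{-1}$), plus your correct remark on reading $\log s$ as $\log(s+1)$ for small $s$.
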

We postpone its proof and first use it to deduce Proposition~\ref{pro:s-term-app}.

\begin{proof}[Proof of Proposition~\ref{pro:s-term-app}]
We first prove the case $q=\infty$. If $v=\infty$, then we obtain from Lemma~\ref{lem:x-star-bound} that
\begin{equation} \label{eq:s-term}
\sigma_s(x)_{\infty}
=x_{s+1}^*
\lesssim
\begin{cases}
	s^{-1/p}&\colon p<\infty,\\	
	(\log s)^{-1/u}&\colon p=\infty.
\end{cases}
\end{equation}
If $v<\infty$, then 
\[
\sigma_s(x)_{\infty,v}^v
=\sum_{i=s+1}^{n}(i-s)^{-1}(x_i^*)^v .
\]
Combining \eqref{eq:s-term} with Lemma~\ref{lem:tail-asymp} (i) for $\lambda=v/p$ if $p<\infty$ and (ii) for $\lambda=v/u>1$ if $p=\infty$ completes the proof of the case $q=\infty$.

If $p=q<\infty$ and $v<u$, then 
\[
\sigma_s(x)_{p,v}^v
=\sum_{i=s+1}^{n}(i-s)^{v/p-1}(x_i^*)^v
\le \sum_{i=s+1}^{n}i^{v/p-1}(x_i^*)^v.
\]
The conclusion now follows by H\"older's inequality used as in the proof of Theorem~\ref{thm:main} (III.2), cf. \eqref{eq:III.2}.
\end{proof}

\begin{proof}[Proof of Lemma~\ref{lem:tail-asymp}]
We can assume that $n\ge 2s$, otherwise we increase $n$. We start with (i) and let $\lambda>0$. First, we decompose the sum as follows,
\begin{equation} \label{eq:decomposition}
\sum_{i=s+1}^n (i-s)^{-1}i^{-\lambda}
=\sum_{i=s+1}^{2s} (i-s)^{-1}i^{-\lambda}
+\sum_{i=2s+1}^n (i-s)^{-1}i^{-\lambda}.
\end{equation}
In the first sum on the right-hand side of \eqref{eq:decomposition}, due to monotonicity, we have $i^{-\lambda}\le s^{-\lambda}$. Therefore, 
\[
\sum_{i=s+1}^{2s} (i-s)^{-1}i^{-\lambda}
\le s^{-\lambda}\sum_{i=s+1}^{2s} (i-s)^{-1}
= s^{-\lambda}\sum_{i=1}^{s} i^{-1}
\lesssim s^{-\lambda} \log s.
\]
In the second sum on the right-hand side of \eqref{eq:decomposition}, we have $(i-s)^{-1}\le 2i^{-1}$. Thus, 
\[
\sum_{i=2s+1}^n (i-s)^{-1}i^{-\lambda}
\le 2 \sum_{i=2s+1}^n i^{-1-\lambda}
\lesssim s^{-\lambda}.
\]
Together, this completes the proof of (a).

For the proof of (b) let $\lambda>1$. We can decompose similarly to \eqref{eq:decomposition} and due to monotonicity of $(\log i)^{-\lambda}$ the bound on the first sum is analogous. In order to bound the second sum we note that
\[
\sum_{i=2s+1}^n (i-s)^{-1}(\log i)^{-\lambda}
\le 2 \sum_{i=2s+1}^n i^{-1}(\log i)^{-\lambda}
\lesssim s^{-1}(\log s)^{-\lambda+1}.
\]
This proves (b). 
\end{proof}

Combined, Propositions~\ref{pro:entropy-sparse} and~\ref{pro:s-term-app} can be used to replace Propositions~\ref{pro:edmunds-netrusov} and~\ref{pro:truncation} in the proof of Theorem~\ref{thm:main} in the cases (II), (III.2) and (IV.2) for the upper bounds.

The lower bounds in the proof of Theorem~\ref{thm:main} in the cases (II), (III.2) and (IV.2) can be proven via the following combinatorial lemma, which has been used for bounds on entropy numbers, in coding theory and compressed sensing (see, e.g., \cite[Lemma 9]{DV20} and the references given there). 

\begin{lem}\label{lem:combinatorial}
	Let $s\le n$ be positive integers. There are $T_1,\dots,T_M\subset \{1,\dots,n\}$ with
\begin{enumerate}[label=(\roman*)]
	\item $M\ge (n/4s)^{s/2}$,
	\item $|T_i|=s$ for $i=1,\dots,M$,
	\item $|T_i\cap T_j|<s/2$ for $i\neq j$.
\end{enumerate}
\end{lem}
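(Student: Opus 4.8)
\textbf{Proof plan for Lemma~\ref{lem:combinatorial}.} The plan is to use a probabilistic/greedy argument. First I would pick subsets $T_1, T_2, \dots$ one at a time uniformly at random (or greedily) from $\binom{\{1,\dots,n\}}{s}$, the collection of all $s$-element subsets of $\{1,\dots,n\}$, and control how many previously chosen sets a new candidate can "collide" with in the sense $|T_i \cap T_j| \ge s/2$. The key combinatorial estimate is a bound on the size of the "bad neighborhood" of a fixed $s$-set $T$: the number of $s$-sets $S$ with $|S \cap T| \ge s/2$ is at most $\sum_{j \ge s/2} \binom{s}{j}\binom{n-s}{s-j} \le 2^s \binom{n}{\lfloor s/2 \rfloor} \le 2^s \binom{n}{s}^{1/2} \cdot (\text{correction})$; more carefully, using $\binom{n}{j} \le \binom{n}{s}$ for $j \le s$ and the crude count $2^s \binom{n}{s/2}$, one gets that each $T$ rules out at most roughly $(4s/n)^{s/2}\binom{n}{s}$ of the candidates.

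The main step is then the pigeonhole/greedy extraction: if we have selected $T_1,\dots,T_m$ so far and $m \cdot (\text{max bad neighborhood size}) < \binom{n}{s}$, then there is still an $s$-set avoiding all of them, i.e. satisfying (iii) against all previously chosen sets, so we may continue. This yields a collection of size $M \ge \binom{n}{s} / (\text{max bad neighborhood size})$. Plugging in $\binom{n}{s} \ge (n/s)^s$ and the bad-neighborhood bound $\le 2^s\binom{n}{\lceil s/2\rceil} \le 2^s (2n/s)^{s/2}$ (using $\binom{n}{\lceil s/2\rceil}\le (2en/s)^{s/2}$, absorbing constants), one computes
\[
M \ge \frac{(n/s)^s}{2^s (2en/s)^{s/2}} = \left(\frac{n/s}{\text{const}\cdot (n/s)^{1/2}}\right)^{s} \cdot (\text{lower order}) = \left(\frac{n}{\text{const}\cdot s}\right)^{s/2},
\]
and checking constants carefully gives the stated $M \ge (n/4s)^{s/2}$. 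Properties (ii) holds by construction, and (iii) holds because every pair was chosen to avoid the bad neighborhood.

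The step I expect to be the main obstacle is getting the constants exactly right so that the bound reads $(n/4s)^{s/2}$ rather than merely $(cn/s)^{s/2}$ for some unspecified $c$; this requires using reasonably sharp binomial estimates (e.g. $\binom{n}{k} \le (en/k)^k$ and $\binom{n}{s}\ge (n/s)^s$) and carefully tracking the factor $2^s$ from summing $\binom{s}{j}$ over $j$ and the factor from $\lceil s/2\rceil$ versus $s/2$ in the exponent. An alternative that sidesteps the greedy bookkeeping is a direct first-moment computation: choose $M$ sets independently and uniformly, bound the expected number of "bad pairs" with $|T_i \cap T_j| \ge s/2$ by $\binom{M}{2}$ times the collision probability $\le 2^s (2s/n)^{s/2}$, and delete one set from each bad pair; choosing $M$ roughly $(n/4s)^{s/2}$ makes the expected number of deletions less than $M/2$, leaving at least $(n/8s)^{s/2}$ good sets — which again forces attention to constants, so either route the arithmetic is the real work while the ideas are standard.
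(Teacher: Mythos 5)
The paper itself does not prove Lemma~\ref{lem:combinatorial}; it quotes it from the literature (see \cite[Lemma 9]{DV20} and the references there), and your greedy/maximal-family strategy is exactly the standard route, so the ideas are the right ones and properties (ii), (iii) are unproblematic. The genuine gap is quantitative, in (i), and it is precisely the point you flagged and then waved away: with the estimates you commit to (bad neighborhood of a fixed $s$-set at most $2^s\binom{n}{\lfloor s/2\rfloor}$, $\binom{n}{\lceil s/2\rceil}\le (2en/s)^{s/2}$, $\binom{n}{s}\ge (n/s)^s$), the greedy count gives only
\[
M \ \ge\ \frac{(n/s)^s}{2^s\,(2en/s)^{s/2}} \ =\ \Bigl(\frac{n}{8es}\Bigr)^{s/2},
\]
which is weaker than the stated $(n/4s)^{s/2}$ by the factor $(2e)^{s/2}$, exponential in $s$. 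This loss is built into the pair of bounds $\binom{n}{s}\ge(n/s)^s$ and $\binom{n}{k}\le(en/k)^k$ and cannot be recovered by ``checking constants carefully''. Your probabilistic deletion variant is worse still, and you concede yourself that it only yields $(n/8s)^{s/2}$. So as written, neither route proves the lemma with the constant $4$ in the statement (for the applications in this paper any fixed constant would suffice, but the lemma as stated is not established by your computation).

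The repair stays entirely within your framework but replaces the crude Stirling-type bounds by an exact ratio computation. Take a maximal family $T_1,\dots,T_M$ of $s$-sets with pairwise intersections $<s/2$; by maximality every $s$-set $S$ satisfies $|S\cap T_i|\ge\lceil s/2\rceil$ for some $i$, and any such $S$ contains a set $R\subset T_i$ with $|R|=\lceil s/2\rceil$ while its remaining $\lfloor s/2\rfloor$ elements lie in $\{1,\dots,n\}\setminus R$. Hence
\[
\binom{n}{s}\ \le\ M\,\binom{s}{\lceil s/2\rceil}\binom{n-\lceil s/2\rceil}{\lfloor s/2\rfloor},
\qquad\text{and}\qquad
\frac{\binom{n}{s}}{\binom{n-\lceil s/2\rceil}{\lfloor s/2\rfloor}}
=\prod_{j=0}^{\lceil s/2\rceil-1}\frac{n-j}{s-j}
\ \ge\ \Bigl(\frac{n}{s}\Bigr)^{\lceil s/2\rceil},
\]
since $\frac{n-j}{s-j}\ge\frac{n}{s}$ for $n\ge s$. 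Combining this with $\binom{s}{\lceil s/2\rceil}\le 2^{s}=4^{s/2}$, $\lceil s/2\rceil\ge s/2$ and $n/s\ge1$ gives $M\ge (n/s)^{\lceil s/2\rceil}4^{-s/2}\ge (n/4s)^{s/2}$, i.e.\ the stated bound. Note that both refinements are needed: if you keep $\binom{n}{\lfloor s/2\rfloor}$ in place of $\binom{n-\lceil s/2\rceil}{\lfloor s/2\rfloor}$, the first factor of the product drops to $(n-\lfloor s/2\rfloor)/s$ and you again fall short of the constant $4$.
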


In the cases (II) and (IV.2) we can use indicators $\mathbf{1}_{T_1},\dots,\mathbf{1}_{T_M}$ based on the sets $T_1,\dots,T_M$ in Lemma~\ref{lem:combinatorial} with $s=\ell(n,k)$ as in Theorem~\ref{thm:main}. Renormalizing these indicators gives us a large set of well-separated unit vectors and thus a lower bound on the entropy numbers for $\log n\le k\le n$ (see Step 4 in the proof of \cite[Theorem 2]{KV20}). 

In the case (III.2) we can use appropriately rescaled indicators of different sizes, adapting the arguments used in the proof of \cite[Theorem 10]{DV20} which are similar to the more elaborate approach used in the proof of \cite[Section 4, Theorem 2]{EN98}. For convenience of the reader we sketch the argument. First, let us note that if $E_1,\dots,E_n$ are disjoint subsets of $\IN$ with cardinality $\# E_\ell\asymp 4^{\ell}$ and $\alpha_1,\dots,\alpha_n\in\IR$, then, for $0<p,u\le \infty$, we have
\begin{equation} \label{eq:dyadic}
\Big\| \sum_{i=1}^{n}\alpha_i \mathbf{1}_{E_\ell}\Big\|_{p,u}
\asymp 
\begin{cases}
\Big(\sum_{\ell=1}^{n}4^{\ell u/p}|\alpha_\ell|^{u}\Big)^{1/u}&\colon u<\infty,\\
\max\limits_{1\le \ell\le n}4^{\ell/p}|\alpha_{\ell}|&\colon u=\infty,
\end{cases}
\end{equation}
where for $p=\infty$ we use $a/\infty=0$ for any $a\in\IR$ (see \cite[Lemma 6]{EN11}).

Following the proof of \cite[Theorem 10]{DV20}, let $n\in\N$ be sufficiently large and $\nu\ge 1$ be the largest integer such that $12\cdot 4^{\nu}\le n$ and $\mu$ be the smallest integer such that $k\le 4^{\mu}/2$. We obtain from Lemma~\ref{lem:combinatorial} that, for $n\in\N$ sufficiently large, $M\ge (n/4^{\mu+1})^{4^{\mu}/2}$ families $\{\widetilde{T}_j^{\ell}\colon \mu\le \ell\le \nu\}$, $1\le \mu\le \nu$, of such sets such that
\[
\frac{2}{3}4^{\ell}\le |\widetilde{T}_j^{\ell}|\le 4^{\ell},\quad \mu\le \ell \le \nu,\, 1\le j\le M,
\]
the sets $\widetilde{T}_j^{\ell}, \mu\le \ell\le \nu$ are mutually disjoint, and 
\[
|\widetilde{T}_i^{\ell}\cap \widetilde{T}_j^{\ell}|\le \frac{1}{2}4^{\ell},\quad \mu\le \ell \le \nu,\, i\neq j.
\]
Defining the vectors 
\[
x^j := \sum_{\ell=\mu}^{\nu}4^{-\ell/p}\mathbf{1}_{\widetilde{T}_j^{\ell}},\quad j=1,\dots,M,
\]
we obtain from \eqref{eq:dyadic} that
\[
\|x^j\|_{p,u}\asymp (\nu-\mu+1)^{1/u}
\]
and 
\[
\|x^i-x^j\|_{p,v}\gtrsim (\nu-\mu+1)^{1/v},\quad i\neq j.
\]
By rescaling, we can ensure that the points $x^j$ are in $B_{p,u}^n$ and are pairwise separated in the quasi-norm of $\ell_{p,v}^n$ by $\gtrsim (\nu-\mu+1)^{1/v-1/u}$. Noting that $\nu-\mu+1\gtrsim \log(n/k+1)$ the proof can be concluded as in Step 4 of the proof of Theorem 10 in \cite{DV20}. 

\subsection*{Acknowledgement}
We would like to thank G.~Schechtman for providing us with the proof of Lemma~\ref{lem:lattice} and the anonymous referee for the valuable comments, which helped to improve the manuscript.
Joscha Prochno’s research is supported by the German Research Foundation (DFG) under project 516672205 and by the Austrian Science Fund (FWF) under project P-32405.
This research was funded in whole or in part by the Austrian Science Fund (FWF) [Grant DOI: 10.55776/P32405; 10.55776/J4777].
The work of Jan Vyb\'\i ral has been supported by the grant P202/23/04720S of the Grant Agency of the Czech Republic.
For open access purposes, the authors have applied a CC BY public copyright license to any author-accepted manuscript version arising from this submission.

\bibliographystyle{plain}
\bibliography{entropy}

\end{document}